\crefname{ineq}{Inequality}{Inequalities}
\newtheorem{theorem}{Theorem}[section]
\newtheorem{proposition}[theorem]{Proposition}
\newtheorem{lemma}[theorem]{Lemma}
\newtheorem{corollary}[theorem]{Corollary}
\theoremstyle{definition}
\newtheorem{definition}[theorem]{Definition}
\newtheorem{example}[theorem]{Example}
\newcommand{\F}{\mathbb{F}}
\newcommand{\Z}{\mathbb{Z}}
\DeclareMathOperator{\Tr}{Tr}
\DeclareMathOperator{\PG}{PG}
\DeclareMathOperator{\PGL}{PGL}
\DeclareMathOperator{\AG}{AG}
\DeclareMathOperator{\GL}{GL}
\newcommand{\orthogoval}{orthogoval}
\newcommand\st{\,:\,}
\DeclareMathOperator{\CA}{CA}
\DeclareMathOperator{\STS}{STS}
\DeclareMathOperator{\SQS}{SQS}
\DeclareMathOperator{\CPHF}{CPHF}
\DeclareMathOperator{\SCPHF}{SCPHF}
\DeclareMathOperator{\D}{D}
\def\Cr{\operatorname{Cr}}
\def\id{\operatorname{id}}
\DeclareRobustCommand\onedot{\futurelet\@let@token\@onedot}
\def\@onedot{\ifx\@let@token.\else.\null\fi\xspace}
\def\etal{{et al}\onedot}
\begin{document}
\date{October 18, 2022}

\title{Sets of mutually \orthogoval{} projective and affine planes}

\author{
Charles J. Colbourn\footnote{
School of Computing and Augmented Intelligence,
Arizona State University, Tempe, AZ 85287-8809, USA
{\tt Charles.Colbourn@asu.edu}} 
\and 
Colin Ingalls\footnote{
School of Mathematics and Statistics, Carleton University,
1125 Colonel By Drive, Ottawa, ON K1S 5B6, Canada
{\tt coliningalls@cunet.carleton.ca}; supported by NSERC}
\and 
Jonathan Jedwab\footnote{
Department of Mathematics, 
Simon Fraser University, 8888 University Drive, Burnaby, BC V5A 1S6, Canada, 
{\tt jed@sfu.ca}; supported by NSERC} 
\and
Mark Saaltink\footnote{unaffiliated researcher, {\tt mark@saaltink.ca}} 
\and 
Ken W. Smith\footnote{Department of Mathematics and Statistics, Sam Houston State University, Huntsville, TX 77341, USA
{\tt kenwsmith54@gmail.com}} 
\and
Brett Stevens\footnote{
School of Mathematics and Statistics, Carleton University, 
1125 Colonel By Drive, Ottawa ON K1S 5B6, Canada
{\tt brett@math.carleton.ca}; supported by NSERC}
}

\maketitle

\begin{abstract}
A pair of planes, both projective or both affine, of the same order and on the same pointset are {\em \orthogoval{}} if each line of one plane intersects each line of the other plane in at most two points.
In this paper we prove new constructions for sets of mutually \orthogoval{} planes, both projective and affine, and review known results that are equivalent to sets of more than two mutually \orthogoval{} planes.
We also discuss the connection between sets of mutually \orthogoval{} planes and covering arrays.
\end{abstract}

\section{Introduction}
In a projective plane of order $q$ an {\em oval} is a set of $q+1$ points, no three of which are collinear.  A beautiful theorem, independently published multiple times, states that in $\PG(2,q)$ there exists a set of $n^2+n+1$ ovals which form the blocks of a second projective plane \cites{baker_projective_1994,MR3248524,glynn_finite_1978,MR420430}. 
In the earliest proof, if $D$ is a $(n^2+n+1,n+1,1)$-difference set over $\Z_{n^2+n+1}$, then $-D$ is an oval in the projective plane developed from $D$. Since $-D$ is itself also a difference set, it can be developed into the second projective plane on the same ground set, $\Z_{n^2+n+1}$.  Hence we have constructed the blocks of two projective planes with the property that the lines of one are ovals in the other.  We introduce the term {\em \orthogoval{}} to describe this property.
\begin{definition}
 Two planes, both projective or both affine, of the same order and on the same pointset are {\em \orthogoval{}} if the blocks of one are ovals in the other, or in other words a line of one plane intersects any line in the other plane in at most two points.  A set of planes is called a {\em set of mutually \orthogoval{} planes} if the planes are pairwise \orthogoval{}.
\end{definition}
The primary goal of this paper is to prove new constructions for sets of mutually \orthogoval{} planes, both projective and affine, and to review known results that are equivalent to sets of more than two mutually \orthogoval{} planes.

A pair of \orthogoval{} projective planes of order $q$ was used by Raaphorst, Moura and Stevens to construct a family of covering arrays of strength 3, $\CA(2q^3-1;3,q^2+q+1,q)$ (see Definition ~\ref{def:CA}) from the circulant matrices of linear feedback shift register sequences over finite fields $\F_q$.  For $q > 3$ these are still the best known strength 3 covering arrays for their number of columns and alphabet sizes. Torres-Jimenez and Izquierdo-Marquez constructed covering arrays $\CA(2q^3-q;3,q^2-q+3,q)$ which are equivalent to deleting a particular set of $2q-2$ columns of the $\CA(2q^3-1;3,q^2+q+1,q)$ and then noting that, for every non-zero symbol $x$, the array that remains has two copies of the all-$x$ row.  Deleting one of each of these pairs completes the construction \cite{torres-jimenez_covering_2018}. Colbourn, Lanus and Sarkar found, via Sherwood covering perfect hash families, a $\CA(1016;3,8,64)$ using a conditional expectation search \cite{MR3770276}.  Investigating its structure they observed that if pairs of \orthogoval{} Desarguesian affine planes could be constructed, then keeping just the columns corresponding to the points of the affine planes from the $\CA(2q^3-1;3,q^2+q+1,q)$ and deleting one copy of any repeated row, would yield $\CA(2q^3-q;3,q^2,q)$ and thereby improve Torres-Jimenez and Izquierdo-Marquez's covering arrays by $q-3$ columns.  A second purpose of this paper is to discuss the connection between sets of mutually \orthogoval{} planes and covering arrays.

In a set of mutually \orthogoval{} planes, any three points collinear in one are non-collinear in all the others. Therefore, taking the collection of all lines from each plane in a set of $s$ mutually \orthogoval{} planes gives a strength 3 packing design \cite{colbourn_handbook_2007}.  The {\em packing number} $D(v,k,t)$ of a $v$-set is the maximum number of $k$-subsets (blocks) 
such that every $t$-subset appears in at most one block.  This can be used to calculate an upper bound on $s$.
\begin{theorem} \label{thm_upperbound}
  If there exists a set of $s$ mutually \orthogoval{}
projective planes of order $q$ then
\[
  s (q^2+q+1) \leq \D(q^2+q+1, q+1, 3).
  \]
  If there exists a set of $s$ mutually \orthogoval{}
affine planes of order $q$ then
  \[ s (q^2+q) \leq \D(q^2, q, 3).
  \]
  \end{theorem}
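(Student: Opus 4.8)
The plan is to show directly that, in a set of mutually \orthogoval{} planes, the union of all lines over all the planes is a strength-$3$ packing, and then read off the stated inequality from the definition of the packing number $\D$.

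First I would set up the projective case. Suppose $\Pi_1,\dots,\Pi_s$ are mutually \orthogoval{} projective planes of order $q$, all on a common pointset $V$ with $|V|=q^2+q+1$. Let $\mathcal{B}$ be the collection (a priori a multiset) of all lines of all the $\Pi_i$; then $\mathcal{B}$ has exactly $s(q^2+q+1)$ members, each of size $q+1$. The key step is to verify that $(V,\mathcal{B})$ is a $(q^2+q+1,q+1,3)$-packing, i.e.\ that every $3$-subset $T\subseteq V$ lies in at most one member of $\mathcal{B}$. There are two cases. If $T$ were contained in two distinct lines of the same plane $\Pi_i$, then, since a projective plane is a linear space in which any two points lie on a unique line, those two lines would meet in the three points of $T$, which is impossible. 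If instead $T$ were contained in a line $\ell$ of $\Pi_i$ and a line $\ell'$ of $\Pi_j$ with $i\neq j$, then $\ell\cap\ell'\supseteq T$ has size at least $3$, contradicting the definition of \orthogoval{} (a line of one plane meets a line of the other in at most two points). In particular no line is repeated, and every $3$-subset of $V$ is covered at most once, as required.

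With the packing established, the definition of the packing number gives $|\mathcal{B}| = s(q^2+q+1) \le \D(q^2+q+1, q+1, 3)$, which is the first inequality. The affine case is handled identically: an affine plane of order $q$ has $q^2$ points and $q^2+q$ lines, each of size $q$, it is again a linear space, and \orthogoval{}ity again forbids a triple from lying on lines of two different planes; hence the union of all lines of $s$ mutually \orthogoval{} affine planes of order $q$ is a $(q^2,q,3)$-packing with $s(q^2+q)$ blocks, giving $s(q^2+q)\le \D(q^2,q,3)$.

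I do not expect a genuine obstacle here: the entire content is the observation, already stated in the remark immediately preceding the theorem, that any three points collinear in one plane of the set are non-collinear in every other plane, which is exactly what makes the union of all the lines a strength-$3$ packing. The proof is therefore essentially a bookkeeping exercise counting the blocks and invoking the definition of $\D$.
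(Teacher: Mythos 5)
Your proposal is correct and matches the paper's own argument: the paper proves this theorem via the remark immediately preceding it, namely that any three points collinear in one plane are non-collinear in all the others, so the union of all lines over the $s$ planes is a strength-$3$ packing, and the bound follows by counting blocks ($q^2+q+1$ lines per projective plane, $q^2+q$ per affine plane) against the packing number $\D$. Your case analysis (two lines of the same plane versus lines of different planes) just makes explicit what the paper leaves implicit, so there is nothing further to add.
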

The Johnson bound \cite{colbourn_handbook_2007} on the size of packings is
  \[
    \D(v,k,3) \leq \left \lfloor \frac{v}{k} \left \lfloor \frac{v-1}{k-1} \left \lfloor \frac{v-2}{k-2} \right \rfloor\right \rfloor  \right \rfloor.
  \]
Since the blocks in $\AG(2,2)$ have size two, the set of $s$ copies of $\AG(2,2)$ is trivially mutually \orthogoval{} for every $s$ (similar to the fact that every set of $s$ latin squares of order 1 is mutually orthogonal). Otherwise, the Johnson bound gives the following result.
  \begin{corollary}\label{cor_upperbound}
    \begin{itemize}
      \item If there exists a set of $s$ mutually \orthogoval{}
projective planes of order $q$, then $s \leq \max\{5,q+2\}$.
      \item If there exists a set of $s$ mutually \orthogoval{}
affine planes of order $q > 2$, then $s \leq \max\{7,q+2\}$.
\end{itemize}
    \end{corollary}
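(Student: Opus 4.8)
The plan is simply to feed the two inequalities of \Cref{thm_upperbound} through the Johnson bound quoted just above, and then carry out the elementary (but case-sensitive) evaluation of the nested floor function.

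\textbf{Projective case.} Substitute $v=q^2+q+1$ and $k=q+1$. Working from the inside out, write $q^2+q-1=(q-1)(q+2)+1$, so that $\lfloor (q^2+q-1)/(q-1)\rfloor=q+2$ for all $q\ge 3$. Since $(q^2+q)/q=q+1$ is an integer, the middle floor merely multiplies by $q+1$, giving $(q+1)(q+2)$; and $\frac{q^2+q+1}{q+1}\cdot(q+1)(q+2)=(q^2+q+1)(q+2)$ is again an integer, so the outer floor is inert. Hence $\D(q^2+q+1,q+1,3)\le (q^2+q+1)(q+2)$ for $q\ge 3$, and dividing the first inequality of \Cref{thm_upperbound} by $q^2+q+1$ gives $s\le q+2$. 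Evaluating the Johnson bound directly at $q=2$ gives $\D(7,3,3)\le 35$, hence $s\le 5$. Combining the two ranges, $s\le\max\{5,q+2\}$.

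\textbf{Affine case.} Substitute $v=q^2$ and $k=q$. The only point requiring care is the innermost floor $\lfloor(q^2-2)/(q-2)\rfloor$: since $q^2-2=(q-2)(q+2)+2$, this equals $q+2$ when $q\ge 5$, whereas the remainder term $2/(q-2)$ is at least $1$ for $q\in\{3,4\}$, making the floor equal to $7$ in those two cases. As $(q^2-1)/(q-1)=q+1$ and $q^2/q=q$ are integers, for $q\ge 5$ the Johnson bound simplifies to $\D(q^2,q,3)\le q(q+1)(q+2)$, and dividing the second inequality of \Cref{thm_upperbound} by $q^2+q=q(q+1)$ gives $s\le q+2$. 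For $q=3$ one gets $\D(9,3,3)\le 3\cdot 4\cdot 7=84$, so $12s\le 84$ and $s\le 7$; for $q=4$ one gets $\D(16,4,3)\le 4\cdot 5\cdot 7=140$, so $20s\le 140$ and $s\le 7$. Hence $s\le\max\{7,q+2\}$ for all $q>2$.

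There is no genuinely hard step here: the corollary is a direct substitution into an inequality already in hand, and the only thing that can go wrong is mishandling the nested floors for the small orders $q\in\{2,3,4\}$, which is exactly why I would isolate those cases explicitly rather than fold them into the generic estimate.
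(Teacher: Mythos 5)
Your proposal is correct and follows exactly the route the paper intends: substituting the Johnson bound into \cref{thm_upperbound} and evaluating the nested floors, with the small orders ($q=2$ projective, $q\in\{3,4\}$ affine) treated separately. The numerical checks ($\D(7,3,3)\le 35$, $\D(13,4,3)\le 65$, $\D(9,3,3)\le 84$, $\D(16,4,3)\le 140$) all agree with the paper.
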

We will either tighten or meet these upper bounds for some small values of~$q$.   

In \cref{sec_proj} we discuss projective planes.  There we give a new proof of the existence of a pair of \orthogoval{} projective planes from an algebraic geometric point of view using the Cremona transformation. We also consider if larger sets of such projective planes exist.  In \cref{sec_affine} we turn our attention to affine planes.  We prove that for even characteristic there always exist a pair of planes with the desired property.  When $\gcd(n,6)=1$ we construct a set of three mutually \orthogoval{} affine planes of order $2^n$. We then construct sets of size of seven for $q=4,8$. In \cref{sec_ca}, we discuss the connections between sets of mutually orthogoval{} planes and covering arrays.  We show that for affine planes we can extend the constructed arrays by two additional columns to construct a $\CA(2q^3-q;3,q^2+2,q)$, and demonstrate the utility of sets of more than two \orthogoval{} planes.  In \cref{sec_final} we give some final thoughts.

\section{Projective planes}\label{sec_proj}

\subsection{A pair of \orthogoval{} projective planes}
We start by giving a new proof of the fact that for every prime  power $q$ there exists a pair of Desarguesian \orthogoval{} $\PG(2,q)$.  This proof uses the Cremona transformation and its algebraic geometric approach adds another interesting point of view of this theorem.  For a general introduction to algebraic geometry and its terminologies, see \cite{MR0463157}.  While we believe that the result was broadly speaking known to algebraic geometers we do not know of a explicit articulation or proof of it in the literature.

We express the points of $\PG(2,q)$ with homogeneous coordinates using colons between the coordinates to make them distinct from other numbers that appear in round brackets in this article.  The {\em Cremona transformation}, or {\em standard quadratic transformation} is the partial rational function $\Cr : \PG(2,q)  \dashrightarrow \PG(2,q)$ defined by
\[
  \Cr((x:y:z)) = (x^{-1}: y^{-1}:z^{-1}).
\]
We can extend this partial map and will use the same symbol for this extension, 
\[
  \Cr((x:y:z)) = (xyzx^{-1}: xyzy^{-1}:xyzz^{-1}) = (yz:xz:xy).
\]
This map is undefined on the intersection points of the pairs of lines $yz$, $xz$ and $xy$ (variety $V(yz,xz,xy)$), $\{(1:0:0),(0:1:0),(0:0:1)\}$, and is birational since $\Cr\circ \Cr = \id$.  We are interested in the image of lines and conics under $\Cr$, which are the varieties of degree one and degree two homogeneous polynomials respectively.  For example, if $\ell$ is a line, we take $\Cr(\ell)$ to mean the set $\{\Cr(p):p \in \ell\}$.  This set might be a single point, a set of collinear points or a set of points lying on a conic.  It may not contain all the points of this line or conic because $\Cr$ is a partial map.  We can ``close'' the image set with the following procedure.  Let $C = V(f(x,y,z))$ in $\PG(2,q)$, not contained in the lines $V(x)$, $V(y)$, or $V(z)$.  Let $g(x,y,z) = f(1/x,1/y/1/z)x^a y^b z^c$, where $a$, $b$ and $c$ are minimal such that $g$ is a polynomial.  Then the closure of the image of $f$ under the Cremona transformation is  $\Cr(C) = V(g(x,y,z))$.  

The Cremona transformation has the following properties which are not hard to prove. See \cite{MR1042981,MR3100243} for examples of proofs of properties of $\Cr$.
\begin{proposition}
  \begin{itemize}
  \item $\Cr$ is bijective on $\PG(2,q) - V(xyz)$.
  \item The lines $x$, $y$ and $z$ map to points $(1:0:0)$, $(0:1:0)$ and $(0:0:1)$ respectively.
  \item The line $\beta y + \gamma z$ through $(1:0:0)$ maps to the line $\gamma y + \beta z$, also through $(1:0:0)$; and similarly for lines $\alpha x + \gamma z$ and $\alpha x + \beta y$ through $(0:1:0)$ and $(0:0:1)$ respectively.
  \item Each line $ax+by+cz$ that does not contain any of $(1:0:0)$, $(0:1:0)$ or $(0:0:1)$ maps to the irreducible conic $ayz+bxz+cxy$ which contains all three of $(1:0:0)$, $(0:1:0)$ and $(0:0:1)$.
  \end{itemize}
\end{proposition}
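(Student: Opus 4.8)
The plan is to verify all four assertions by direct substitution into the degree-two formula $\Cr((x:y:z)) = (yz:xz:xy)$, together with the closure recipe described above for images of curves not contained in a coordinate line. As a preliminary I would record two facts. First, $(yz:xz:xy)$ is homogeneous of degree two, so it descends to a well-defined point of $\PG(2,q)$ exactly when one of the three products is nonzero, that is, off $V(yz,xz,xy) = \{(1:0:0),(0:1:0),(0:0:1)\}$. Second, on $\PG(2,q) - V(xyz)$ every coordinate is nonzero, so there $\Cr$ is the map $(x:y:z)\mapsto(x^{-1}:y^{-1}:z^{-1})$; this sends $\PG(2,q) - V(xyz)$ into itself and satisfies $\Cr\circ\Cr=\id$, hence it is an involution of that set and therefore a bijection. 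This settles the first item.

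For the second item, a point of $V(x)$ has the form $(0:s:t)$, and $\Cr((0:s:t)) = (st:0:0)$, which equals $(1:0:0)$ whenever $st\neq0$; the only points of $V(x)$ with $st=0$ are $(0:1:0)$ and $(0:0:1)$, on which $\Cr$ is undefined. Hence the image of $V(x)$ is the single point $(1:0:0)$, and symmetrically for $V(y)$ and $V(z)$.

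The third and fourth items I would handle by substitution plus a reverse-containment argument. If $\ell = V(\beta y + \gamma z)$ passes through $(1:0:0)$ with $\beta,\gamma$ not both zero (the case $\beta\gamma=0$ being a coordinate line, already covered), then for $(x:y:z)\in\ell$ we have $\beta y+\gamma z=0$, so the image $(yz:xz:xy)$ satisfies $\gamma(xz)+\beta(xy)=x(\gamma z+\beta y)=0$; thus $\Cr(\ell)\subseteq V(\gamma y + \beta z)$, and applying the same computation with the roles of $\beta$ and $\gamma$ swapped, together with $\Cr\circ\Cr=\id$, upgrades this to equality away from the excluded fundamental points. Both lines visibly contain $(1:0:0)$. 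For a line $\ell = V(ax+by+cz)$, the property of avoiding all three of $(1:0:0),(0:1:0),(0:0:1)$ is equivalent to $a,b,c$ all being nonzero; then $\ell$ is not a coordinate line, and the closure recipe applied to $f = ax+by+cz$ gives $f(1/x,1/y,1/z) = (ayz+bxz+cxy)/(xyz)$, hence $\Cr(\ell) = V(ayz+bxz+cxy)$, which by inspection passes through each of $(1:0:0),(0:1:0),(0:0:1)$.

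The one step needing more than substitution is the irreducibility of $Q := ayz+bxz+cxy$ when $a,b,c\neq0$. In odd characteristic the Gram matrix of $Q$ is $\tfrac12\bigl(\begin{smallmatrix}0&c&b\\ c&0&a\\ b&a&0\end{smallmatrix}\bigr)$, whose determinant is proportional to $abc\neq0$, so $Q$ is nondegenerate and hence irreducible. In characteristic two I would argue directly: a factorisation $Q=(px+qy+rz)(p'x+q'y+r'z)$ forces $pp'=qq'=rr'=0$ from the vanishing of the $x^2,y^2,z^2$ coefficients, and a short case check on which variable is absent from which factor shows that the coefficient of one of $xy,xz,yz$ must then vanish, contradicting $a,b,c\neq0$. (Alternatively one may invoke that a birational map carries irreducible varieties to irreducible ones, but the elementary argument is self-contained.) I expect this characteristic-two irreducibility check to be the only place with any friction; the remainder is bookkeeping with the two expressions for $\Cr$ and the closure recipe.
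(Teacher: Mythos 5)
Your verification is correct. Note that the paper does not actually prove this proposition: it states that the properties ``are not hard to prove'' and defers to the cited references, so there is no in-paper argument to compare against; your write-up supplies exactly the routine computation the authors are alluding to. The first three items are straightforward substitution, as you say, and the only substantive point is irreducibility of $ayz+bxz+cxy$ when $abc\neq 0$. Your two-pronged treatment is fine: the Gram-determinant argument (determinant proportional to $abc$) settles odd characteristic, and the coefficient-comparison argument --- from $pp'=qq'=rr'=0$, say $p=0$, then nonvanishing of the $xy$ and $xz$ coefficients forces $p',q,r\neq 0$, hence $q'=r'=0$ and the $yz$ coefficient $qr'+q'r$ vanishes, contradicting $a\neq 0$ --- is valid; in fact it uses nothing about characteristic two and works over any field extension, so it alone gives absolute irreducibility in all characteristics, making the odd-characteristic case redundant. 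One small remark: for the third item your containment $\Cr(\ell)\subseteq V(\gamma y+\beta z)$ together with the symmetric computation and $\Cr\circ\Cr=\id$ is the right way to phrase equality, and it matches the paper's own convention that images under the partial map need not contain every point of the target line or conic.
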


\begin{theorem} \label{thm_pair_proj}
For every prime power~$q$, there exists a pair of Desarguesian \orthogoval{} $\PG(2,q)$.
  \end{theorem}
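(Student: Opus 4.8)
The plan is to keep the first plane as the standard Desarguesian $\PG(2,q)$ and to manufacture the second as the image of the first under the Cremona transformation, using the Proposition above to track the image of each line. First I would coordinatise so that $\Cr$ has fundamental points $(1:0:0)$, $(0:1:0)$, $(0:0:1)$ and exceptional lines $V(x)$, $V(y)$, $V(z)$. Writing $\Pi$ for the standard $\PG(2,q)$ on its $q^2+q+1$ points, I would put a second structure $\Pi'$ on the same point set whose blocks are the closures $\Cr(\ell)$ of the images of the lines $\ell$ of $\Pi$. By the Proposition these blocks fall into three families: the $(q-1)^2$ irreducible conics through the three fundamental points (images of the lines missing the coordinate triangle); the $3(q-1)$ lines through exactly one fundamental point (permuted among themselves); and the three exceptional lines $V(x),V(y),V(z)$, which degenerate to points and have to be recovered by blowing up. That is $q^2+q+1$ blocks in all, of which the $(q-1)^2$ conics are genuine ovals of $\Pi$.

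Next I would check that $\Pi'$ is again a Desarguesian $\PG(2,q)$. Because $\Cr\circ\Cr=\id$ and $\Cr$ is birational, the map $\ell\mapsto\Cr(\ell)$ on lines together with $p\mapsto\Cr(p)$ on points is an involution preserving incidence away from the coordinate triangle; to match the two sides up at the fundamental points and exceptional lines it is natural to pass to the blow-up of the projective plane at the three fundamental points, the del Pezzo surface of degree six, on which $\Cr$ lifts to an honest involutive automorphism exchanging the three exceptional divisors with the strict transforms of $V(x),V(y),V(z)$. From this one reads off that $\Pi'$ is an incidence structure isomorphic to $\Pi$, hence a $\PG(2,q)$.

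It remains to prove orthogovality, i.e.\ that every block of $\Pi'$ is an oval of $\Pi$. For the $(q-1)^2$ irreducible-conic blocks this is immediate: a line $m$ of $\Pi$ meets such a conic $C$ in at most two points, since a conic is an oval (equivalently, by B\'ezout, $C$ is an irreducible curve of degree $2$ sharing no component with $m$). The main obstacle — and, I expect, essentially all the remaining work — is the other $3q$ blocks of $\Pi'$, coming from the lines of $\Pi$ that meet the coordinate triangle: the naive recipe sends such a line to another line (and the exceptional lines to points), which is not an oval and in fact coincides with a line of $\Pi$, so the naive $\Pi'$ is not orthogoval to $\Pi$. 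The construction therefore has to be arranged so that these exceptional blocks come out as ovals rather than as lines of $\Pi$ — by choosing the two coordinate triangles in sufficiently general position relative to one another, or by performing the identification through the del Pezzo surface so that the exceptional divisors and the strict transforms of $V(x),V(y),V(z)$ supply the missing ovals. Once that boundary analysis is in place, the conic case above completes the proof.
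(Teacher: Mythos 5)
There is a genuine gap, and you have in fact named it yourself: with the fundamental triangle of $\Cr$ placed at $(1:0:0)$, $(0:1:0)$, $(0:0:1)$ inside $\PG(2,q)$, the $3(q-1)$ lines through exactly one fundamental point map to lines again and the three sides of the triangle collapse to points, so the resulting block set is not a family of ovals of $\Pi$ no matter how the incidence structure is repackaged. The appeal to the degree-six del Pezzo surface does not repair this: blowing up changes the ambient surface, not the underlying point sets in $\PG(2,q)$, and a block of $\Pi'$ that consists of $q+1$ collinear points of $\Pi$ can never be an oval. Likewise, ``choosing the two coordinate triangles in sufficiently general position'' cannot succeed as long as the triangle lies in $\PG(2,q)$: every vertex of the triangle lies on $q+1$ lines of the subplane, and each such line is sent to a line, so exceptional blocks are unavoidable. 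Your proposal thus stops exactly at the point where the real work begins, with only a hope that some boundary analysis will fix it.

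The paper's proof supplies precisely the missing idea: conjugate the Cremona transformation by a projectivity $\sigma$ of $\PG(2,q^3)$ that moves the fundamental triangle to a non-collinear Frobenius-conjugate triple $(\omega:\omega^{q}:\omega^{q^2})$, $(\omega^{q}:\omega^{q^2}:\omega)$, $(\omega^{q^2}:\omega:\omega^{q})$ lying outside the subplane $\PG(2,q)$ (non-collinearity is guaranteed by choosing $\omega$ avoiding the roots of $x^{3q^2}-3x^{q^2+q+1}+x^{3q}+x^3$). Then $\Cr'=\sigma\circ\Cr\circ\sigma^{-1}$ has no exceptional points or lines inside $\PG(2,q)$: no line of the subplane can pass through a fundamental point (it would have to contain all three conjugates, contradicting non-collinearity), so \emph{every} line maps to an irreducible conic through the conjugate triple, i.e.\ to an oval of the subplane. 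What still has to be verified --- and this is the nontrivial computation you would also need --- is that $\Cr'$ maps $\PG(2,q)$ bijectively onto itself; the paper does this by expanding a subplane point as $\alpha(\omega:\omega^{q}:\omega^{q^2})+\beta(\omega^{q}:\omega^{q^2}:\omega)+\gamma(\omega^{q^2}:\omega:\omega^{q})$, observing that membership in $\PG(2,q)$ is equivalent to $\beta=\alpha^{q}$, $\gamma=\alpha^{q^2}$, and checking that the image coefficients $\alpha^{q+q^2}$, $\alpha^{1+q^2}$, $\alpha^{1+q}$ satisfy the same relations. Until you move the fundamental triangle out of the subplane and carry out an argument of this kind, your construction does not yield a pair of \orthogoval{} planes.
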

  \begin{proof}
    Let $\omega \in \mathbb{F}_{q^3}$ so that $\omega \not\in \mathbb{F}_{q}$ and $\omega$ is not a root of $x^{3q^2} - 3x^{q^2+q+1} + x^{3q}+ x^3$.  Since 0 is a root with multiplicity 3 and 1 is also a root, if $q \geq 3$ then such an $\omega$ exists by the pigeonhole principle.  If $q=2$ then $\alpha+1$ satisfies these requirements where $\alpha$ is a root of $x^3+x+1$.  By these conditions, $(\omega:\omega^{q}:\omega^{q^2})$, $(\omega^{q}:\omega^{q^2}:\omega)$ and $(\omega^{q^2}:\omega:\omega^{q})$ are not collinear.  Let $\sigma$ be the projective linear transformation of $\PG(2, q^3)$ that maps $(1:0:0)$, $(0:1:0)$, $(0:0:1)$ and $(1:1:1)$ to  $(\omega:\omega^{q}:\omega^{q^2})$, $(\omega^{q}:\omega^{q^2}:\omega)$, $(\omega^{q^2}:\omega:\omega^{q})$ and $(1:1:1)$ respectively.  We conjugate the Cremona transformation by $\sigma$
    \[
      \Cr' = \sigma \circ \Cr \circ \sigma^{-1}
    \]
and consider its action on $\PG(2,q^3)$.

Since $Cr$ maps
\[
  \alpha (1:0:0) + \beta (0:1:0) + \gamma (0:0:1)
\]
to
\[
  \beta \gamma (1:0:0) + \alpha \gamma (0:1:0) + \alpha \beta (0:0:1),
\]
$\Cr'$ maps 
\[ \alpha  (\omega:\omega^{q}:\omega^{q^2})+ \beta (\omega^{q}:\omega^{q^2}:\omega) + \gamma (\omega^{q^2}:\omega:\omega^{q})
\]
to
\[
  \beta \gamma (\omega:\omega^{q}:\omega^{q^2}) + \alpha \gamma (\omega^{q}:\omega^{q^2}:\omega) + \alpha \beta (\omega^{q^2}:\omega:\omega^{q}).
\]
Lines not containing any of  $(\omega:\omega^{q}:\omega^{q^2})$, $(\omega^{q}:\omega^{q^2}:\omega)$, $(\omega^{q^2}:\omega:\omega^{q})$ are mapped to irreducible conics that contain all three of these conjugate points.

Suppose that $(x:y:z) \in \PG(2,q) \subset \PG(2,q^3)$.  Express this point as a sum $(x:y:z) = \alpha  (\omega:\omega^{q}:\omega^{q^2})+ \beta (\omega^{q}:\omega^{q^2}:\omega) + \gamma (\omega^{q^2}:\omega:\omega^{q})$.  Then the fact that the Frobenius map fixes this point ($(x:y:z)^{q} = (x:y:z)$) implies that
\begin{equation}
   \beta = \alpha^{q} \quad \mbox{ and }\quad  \gamma = \alpha^{q^2}. \label{coef_ident}
          \end{equation}
            Conversely if $\alpha$, $\beta$ and $\gamma$ satisfy \cref{coef_ident} then $\alpha  (\omega:\omega^{q}:\omega^{q^2})+ \beta (\omega^{q}:\omega^{q^2}:\omega) + \gamma (\omega^{q^2}:\omega:\omega^{q})$ is invariant under the Frobenius map and is therefore in the subplane $\PG(2,q)$.

Thus,
\begin{align*}
  \Cr'((x:y:z)) &= \beta \gamma (\omega:\omega^{q}:\omega^{q^2}) + \alpha \gamma (\omega^{q}:\omega^{q^2}:\omega) + \alpha \beta (\omega^{q^2}:\omega:\omega^{q}) \\
  &= \alpha^{q+q^2}  (\omega:\omega^{q}:\omega^{q^2}) + \alpha^{1+q^2} (\omega^{q}:\omega^{q^2}:\omega) + \alpha^{1+q} (\omega^{q^2}:\omega:\omega^{q}).
\end{align*}
The coefficients $\alpha^{q+q^2}$, $\alpha^{1+q^2}$ and $\alpha^{1+q}$ satisfy the identities in \cref{coef_ident} and so $\Cr'((x:y:z)) \in \PG(2,q)$ if and only if $(x:y:z) \in \PG(2,q)$.

Thus $\Cr'$ is a bijection on $PG(2,q)$ that maps lines to irreducible conics and its image forms a second plane which is \orthogoval{} to the first.
\end{proof}

\subsection{Sets of more than two \orthogoval{} projective planes}

A $\PG(2,2)$ is a Steiner triple system (STS) of order 7 and any 3 non-collinear points are an oval. Thus two $\STS(7)$ are \orthogoval{} if and only if their blocksets are disjoint.  It is known that there does not exist a set of three of $\STS(7)$s with mutually disjoint blocksets \cite{colbourn_triple_1999}.
When $q=3$ it is known that the collection of all 715 4-sets of a 13-set can be decomposed into 55 copies of $\PG(2,3)$ \cite{MR704245}. 
But not all of the lines in these copies will be ovals in the other copies: if a 4-set intersects a line in three points it is not an oval in the plane containing the line.  From \cref{cor_upperbound} we know that if a set of $s$ mutually \orthogoval{} projective planes in $\PG(2,3)$ exists then
\[
  13s \leq \D(13,4,3) = 65,
\]
and so $s \leq 5$. Chu and Colbourn have shown that a strength 3 packing of 4-sets on 13 points with a cyclic automorphism group has at most 52 blocks \cite{MR2059987}.  There does exist such a cyclic packing which can be decomposed into four projective planes of order 3.  This consists of the standard Desarguesian plane, together with the points of the conics
\begin{align*}
  z^2 + xy & =0\\
  z^2+yz+xy &= 0\\
  z^2+2yz+2x &=0
\end{align*}
and all the images of these conics in the Singer cycle.  In terms of difference sets the blocks of the Desarguesian plane are the development of $D = \{0,1,3,9\}$.  The three conics above correspond to difference sets $-D$, $2D$ and $-2D$ respectively.  Baker \etal proved that no conics are repeated in this set \cite{baker_projective_1994}, and it is easy to check that no triple is repeated in any block.  These give the blocks
\begin{align*}
  \{0,1,3,9\}+x,\quad \{0,1,5,11\}+x,\quad \{0,1,4,6\}+x,\quad \{0,1,8,10\}+x
  \end{align*}
  for $x \in \mathbb{Z}_{13}$.

 We now show that there does not exist a set of five \orthogoval{} projective planes of order~3. Suppose otherwise, for a contradiction. Then the set of all their blocks forms a maximum packing with $\D(13,4,3) =65$ blocks.  Each plane corresponds to a clique of size 13 in the 1-block intersection graph of the packing design.  In any such packing with 65 blocks, 26 triples are uncovered, the number of blocks containing a pair of points is 5, and each point is contained in exactly 20 blocks.  From this it can be deduced that the 26 missing triples are the blocks of a $\STS(13)$.  Adjoining one new point to each of these and adding them to the design yields a Steiner quadruple system on 14 points (a $\SQS(14)$).  Thus every $v=13$, $k=4$, $t=3$ packing with 65 blocks is obtained from an $\SQS(14)$ by the deletion of one point.  There are 4 non-isomorphic $\SQS(14)$ \cite{MR302474}.  Using SageMath \cite{sagemath}, the free open-source mathematics software system, for each of these $\SQS(14)$ we can delete each point and construct the 1-block intersection graph of the resulting packing.
None of these graphs has a clique of size larger than 9 (code available upon request), which gives the required contradiction.
  
To determine the maximum size of a set of mutually \orthogoval{} projective planes of orders $q=4$ and $q=5$ we built the 1-intersection graph of all the ovals in these two projective planes in SageMath.  Cliques of size $q^2+q+1$ correspond to projective planes with ovals for blocks.  A graph is then formed with all these possible cliques as vertices and adjacency determined by a pair being \orthogoval{}.  For both $q=4$ and $q=5$, no pairs of any of these planes were \orthogoval{},  thus there exists a pair of \orthogoval{} projective planes of orders $q=4$ and $5$ but no set of three. Source code is available upon request. 

We summarize the results of this subsection.
  \begin{theorem}\label{thm_proj_sum}
The size of a largest set of mutually \orthogoval{} projective planes in $\PG(2,q)$ is two for $q \in \{2,4,5\}$ and is four for $q=3$.
    \end{theorem}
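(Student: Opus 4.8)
The plan is to assemble the statement from the constructions and non-existence results developed in this subsection, in each case separating the lower bound (exhibiting a set of the claimed size) from the upper bound (excluding any larger set); recall that \cref{cor_upperbound} already caps the size at $5$ for $q\in\{2,3\}$, at $6$ for $q=4$, and at $7$ for $q=5$, so only the specified finite improvements remain.

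For the lower bounds, \cref{thm_pair_proj} immediately supplies a pair of orthogoval $\PG(2,q)$ for every prime power, which suffices for $q\in\{2,4,5\}$. For $q=3$ I would write down the explicit set of four planes consisting of the standard Desarguesian $\PG(2,3)$ together with the three planes obtained by developing, in the Singer cycle, the conics corresponding to the difference sets $-D$, $2D$ and $-2D$ over $\Z_{13}$, where $D=\{0,1,3,9\}$. The verification has two finite parts: that by Baker \etal no conic is repeated among the $4\cdot 13$ blocks (so the four planes are genuinely distinct and each list of $13$ conics really is a plane), and that no triple of points lies on two of the listed blocks; both are routine checks over the $13$ cyclic translates of the base blocks $\{0,1,3,9\}$, $\{0,1,5,11\}$, $\{0,1,4,6\}$, $\{0,1,8,10\}$.

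For the upper bounds I would proceed by order. When $q=2$, a $\PG(2,2)$ is an $\STS(7)$ in which every non-collinear triple is an oval, so two copies are orthogoval precisely when their blocksets are disjoint; the known impossibility of three $\STS(7)$ with pairwise disjoint blocksets then forbids a set of three. When $q=3$, suppose for contradiction a set of five exists: the union of their $5\cdot 13=65$ blocks is a maximum strength-$3$ packing of $4$-sets on a $13$-set, and I would run the counting argument showing that in any such packing the $26$ uncovered triples form an $\STS(13)$, so adjoining a new point to each uncovered triple completes the packing to one of the four non-isomorphic $\SQS(14)$; the five planes then provide pairwise disjoint $13$-cliques in the $1$-block-intersection graph of this punctured $\SQS(14)$, contradicting the exhaustive (SageMath) verification that each such graph has clique number at most $9$. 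When $q\in\{4,5\}$, I would invoke the exhaustive enumeration of all cliques of size $q^2+q+1$ in the $1$-intersection graph of the ovals of $\PG(2,q)$ --- these are exactly the sub-planes of $\PG(2,q)$ whose blocks are ovals --- and note that the auxiliary graph on these oval-planes with edges given by the orthogoval relation has no edges at all; since any set of mutually orthogoval planes of order $q\in\{4,5\}$ may be relabelled so that one member is the standard $\PG(2,q)$ (the plane of order $4$ or $5$ being unique), the remaining members would form a clique in this edgeless graph, so no third plane can be added.

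The main obstacle is that three of the four upper bounds are ultimately settled by finite but nontrivial computer searches rather than by a closed-form argument, so the real content is (i) the structural reduction for $q=3$ that forces a $65$-block packing to arise from a punctured $\SQS(14)$, thereby shrinking an a priori unbounded search to four graphs, and (ii) recasting the $q=4,5$ problem as a clique search in the explicitly computable $1$-intersection graph of ovals; once these reductions are in place the remaining verifications are mechanical.
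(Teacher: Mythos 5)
Your proposal is correct and follows essentially the same route as the paper: the pair from \cref{thm_pair_proj} plus the explicit cyclic four-plane set for $q=3$, the disjoint-$\STS(7)$ argument for $q=2$, the reduction of a hypothetical five-plane set for $q=3$ to a punctured $\SQS(14)$ with clique number at most $9$, and the computational clique/orthogoval-graph search over oval-planes for $q=4,5$. The only addition is your explicit appeal to the uniqueness of the planes of orders $4$ and $5$ to normalize one member to the standard plane, which the paper leaves implicit.
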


\section{Affine planes}\label{sec_affine}

Because all lines have size two in an affine plane of order 2, a collection of any number of affine planes of order 2 on the same point set is trivially mutually \orthogoval{}.  If $q=3$ then the large set of $\STS(9)$ gives a collection of seven mutually \orthogoval{} affine planes in $\AG(2,3)$.  By \cref{cor_upperbound}, this is the most possible.  Up to isomorphism there are two distinct large sets of $\STS(9)$~\cite{MR2012539}.
  \begin{theorem}
     The size of a largest set of mutually \orthogoval{} affine planes in $\AG(2,3)$ is seven.
    \end{theorem}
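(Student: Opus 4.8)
The plan is to reduce the statement to the classical existence of a \emph{large set} of Steiner triple systems of order $9$, together with the upper bound already recorded in \cref{cor_upperbound}.

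First I would note that an affine plane of order $3$ on a $9$-point set is precisely an $\STS(9)$ on that set: its $12$ lines are $3$-subsets meeting every pair of points exactly once, and conversely the (unique up to isomorphism) $\STS(9)$ has exactly these parameters and is resolvable into four parallel classes, hence is an affine plane of order $3$. Next I would establish that two affine planes of order $3$ on the same point set are \orthogoval{} if and only if their block sets are disjoint. Indeed, in an $\STS(9)$ two distinct blocks meet in at most one point, so if $S$ is a $3$-subset that is not a line of the plane $\Pi'$, then $|S \cap m| \le 2$ for every line $m$ of $\Pi'$ (equality to $3$ would force $S = m$). Hence a line of $\Pi$ meets every line of $\Pi'$ in at most two points precisely when it is not a line of $\Pi'$, so $\Pi$ and $\Pi'$ are \orthogoval{} exactly when they share no line.

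Consequently a set of $s$ mutually \orthogoval{} affine planes of order $3$ is the same as a collection of $s$ copies of $\STS(9)$ on a common $9$-point set with pairwise disjoint block sets. There are $\binom{9}{3} = 84$ triples and each $\STS(9)$ uses $12$ of them, so $s \le 7$; this also follows directly from \cref{cor_upperbound}, since $\max\{7,q+2\} = 7$ when $q = 3$. When $s = 7$ the seven block sets must partition all $84$ triples, that is, the seven planes form a large set of $\STS(9)$. It then remains only to invoke the classical fact that a large set of $\STS(9)$ exists (indeed there are exactly two up to isomorphism \cite{MR2012539}), which supplies seven mutually \orthogoval{} affine planes of order $3$ and meets the bound.

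There is no real obstacle here: the only substantive input is the known existence of a large set of $\STS(9)$, and everything else is the observation that the \orthogoval{} property for affine planes of order $3$ degenerates to disjointness of block sets. If one wanted a self-contained argument, the main (and only) work would be to exhibit an explicit partition of the $84$ triples on $9$ points into seven $12$-block systems and check that each is an $\STS(9)$, a short finite verification.
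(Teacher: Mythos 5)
Your proposal is correct and follows essentially the same route as the paper: the lower bound comes from a large set of $\STS(9)$ (each of which, by uniqueness of $\STS(9)$, is a copy of $\AG(2,3)$, and disjointness of block sets is exactly the \orthogoval{} condition for planes whose lines have only three points), and the upper bound is \cref{cor_upperbound}. Your added explanation of why \orthogoval{} degenerates to block-disjointness in this case is a useful elaboration of what the paper leaves implicit, but it is not a different argument.
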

This is the only case of \orthogoval{} affine planes of odd order which we know of, whereas when $q$ is even we can construct pairs or triples of \orthogoval{} affine planes and in some cases larger sets.  We identify the points of $\AG(2,q)$ as the subset of points $(x:y:1)$ from $\PG(2,q)$.
\subsection{A pair of \orthogoval{} affine planes}\label{sec_two_affine}

To show that there exists a pair of \orthogoval{} affine planes of order $q = 2^n$, we will need irreducible cubic polynomials in $\F_{q}[x]$ of the form $x^3 + bx + c$. When $n$ is even, $2^n-1$ is divisible by 3 and thus there exist elements $b \in \F_{2^n}$ which are not cube roots.  For each such $b$, the polynomial $x^3 - b$ is irreducible.  When $n$ is odd, every element is a cube root, $f(x) = x^3$ is a bijection and all polynomials of the form $x^3 + b$ are reducible.  However irreducible cubics of the desired form still exist. 
\begin{lemma}\label{cubic_lemma}
For each $n$, there exists an irreducible polynomial of the form $x^3+bx+c$ in $\F_{2^n}[x]$.
\end{lemma}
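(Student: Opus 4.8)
The plan is to use the elementary fact that a cubic over any field is irreducible if and only if it has no root, and then to choose $b$ so that the polynomial function $x \mapsto x^3 + bx$ on $\F_{2^n}$ fails to be surjective, leaving some value that can serve as the constant term $c$.

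First I would record that any nontrivial factorisation of a degree-$3$ polynomial contains a linear factor, so $x^3 + bx + c$ is irreducible over $\F_{2^n}$ as soon as it has no root there; equivalently, as soon as $c \notin \im(P_b)$ where $P_b(x) = x^3 + bx$. Since $\F_{2^n}$ is finite, it therefore suffices to pick $b$ so that $P_b$ is not a bijection of $\F_{2^n}$. Next I would exploit that squaring is bijective in characteristic $2$: every $b$ equals $\beta^2$ for a unique $\beta \in \F_{2^n}$, and then
\[
  x^3 + bx = x^3 + \beta^2 x = x(x+\beta)^2 .
\]
Taking any nonzero $\beta$ and setting $b = \beta^2$, the map $P_b$ sends both $0$ and $\beta$ to $0$, so it is not injective, hence not surjective; choosing any $c \notin \im(P_b)$ then finishes the argument. (For $n=1$ this recovers $x^3 + x + 1$.)

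This single argument covers every $n$, so no split into the even and odd cases is needed; note that when $n$ is odd it automatically produces $b \neq 0$, in agreement with the remark above that $x^3 + b$ is always reducible in that case. A second, equally short route would be to depress a known irreducible cubic: there are $(2^{3n} - 2^n)/3 \geq 1$ monic irreducible cubics over $\F_{2^n}$, and — in contrast with characteristic $3$ — the substitution $x \mapsto x + a$ turns $x^3 + a x^2 + \cdots$ into a polynomial whose coefficient of $x^2$ vanishes, since $(x+a)^3 = x^3 + a x^2 + a^2 x + a^3$ over a field of characteristic $2$.

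I do not anticipate a genuine obstacle here. The only points needing care are the characterisation of reducibility for cubics (a reducible cubic must have a root) and the characteristic-$2$ identity $x^3 + \beta^2 x = x(x+\beta)^2$, both of which are routine.
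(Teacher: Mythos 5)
Your proof is correct, but it takes a genuinely different route from the paper's. You argue directly for existence: a cubic over a field is irreducible exactly when it has no root, so it suffices to find $b$ and $c$ with $c \notin \im(P_b)$, where $P_b(x)=x^3+bx$; writing $b=\beta^2$ with $\beta\neq 0$ gives the characteristic-2 factorisation $P_b(x)=x(x+\beta)^2$, which sends both $0$ and $\beta$ to $0$, so $P_b$ is not injective, hence not surjective on the finite field, and any omitted value serves as $c$ (automatically nonzero, since $0=P_b(0)$ is in the image, as required in the later use in \cref{oval_spread}). The paper instead proves the lemma by counting: it enumerates the reducible polynomials $x^3+bx+c$ with $c\neq 0$, splitting them into products of three linear factors $(x-r_0)(x-r_1)(x-(r_0+r_1))$ and products $(x^2+dx+e)(x+d)$ of an irreducible quadratic with its forced linear cofactor, and concludes that exactly $(q-1)(q+1)/3>0$ of the $q(q-1)$ candidates are irreducible. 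Your argument is shorter and more elementary, needing only the rootless-cubic criterion, and it shows in passing that every nonzero $b$ admits a suitable $c$; the paper's computation buys more information, namely the exact count of irreducible depressed cubics with nonzero constant term, though only existence is used elsewhere. Your alternative suggestion, depressing one of the $(q^{3n/ n}-q)/3$-type count of monic irreducible cubics (i.e.\ $(q^3-q)/3$ of them over $\F_q$) via the substitution $x\mapsto x+a$, which does eliminate the $x^2$ term in characteristic 2, is also sound and again differs from the paper's counting approach.
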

\begin{proof} There exist $q(q-1)$ monic quadratic polynomials $x^2+bx+c$ with $c\neq 0$, of which $(q-1)(q-2)/2 + (q-1) = q(q-1)/2$ are reducible and thus $q(q-1)/2$ are irreducible.  There are a total of $q(q-1)$ cubic polynomials $x^3 + bx + c$ with $c \neq 0$.  If such a cubic factors into linear terms, it must be of the form $(x-r_0)(x-r_1)(x-(r_0+r_1))$ with $r_0,r_1, r_0+r_1  \neq 0$.  Thus there are $(q-1)(q-2)/6$ such reducible cubics.  If such a cubic factors into an irreducible quadratic and a linear term, it must be of the form $(x^2+dx+e)(x+d)$ where $d,e \neq 0$ and the linear term is determined by the quadratic term.  Thus the number of these reducible cubics matches the number of irreducible quadratics, $q(q-1)/2$.  Thus there are $(q-1)(q+1)/3 > 0$ monic irreducible cubics with no $x^2$ term and non-zero constant term.
\end{proof}

For $q=2^n$, an oval $\mathcal{O}$ in $\PG(2,q)$ such that $(x_0+x_1:y_0+y_1:1) \in \mathcal{O}$ for all $(x_0:y_0:1), (x_1:y_1:1) \in \mathcal{O}$ is a {\em translation oval} \cite{hirschfeld_projective_1998}.  All ovals can be transformed in $\PGL(3,q)$ to an oval containing $(1:0:0)$, $(0:1:0)$, $(0:0:1)$ and $(1:1:1)$.  The translation ovals containing these four points are exactly $\{(x^{2^m}:x:1) : x \in \F_q\} \cup \{(1:0:0)\}$ with nucleus $(0:1:0)$ for all $\gcd(m,n)=1$.  We show that there always exists a pencil of conic translation ovals in $\PG(2,q)$. For this we need to characterize the conic translation ovals that may not contain $(1:0:0)$, $(0:1:0)$ or $(1:1:1)$.

\begin{lemma}\label{ovals_as_subspaces}
  If $q=2^n$, the conic oval in $\PG(2,q)$ defined by
  \[
    ax^2+by^2 +cz^2 + fyz + gxz +hxy= 0
  \]
  is a translation oval if and only if $h=c=0$.
\end{lemma}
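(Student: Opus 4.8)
The plan is to prove both implications by directly expanding $Q(x_0+x_1:y_0+y_1:1)$, where $Q(x,y,z)=ax^2+by^2+cz^2+fyz+gxz+hxy$, exploiting that squaring is additive in characteristic~$2$. Since the conic is an oval, the line $V(z)$ meets it in at most two points, so for $q=2^n\ge 4$ the conic has at least $q-1\ge 3$ affine points of the form $(x:y:1)$; the translation-oval property constrains only these affine points. Expanding in characteristic~$2$, using $(x_0+x_1)^2=x_0^2+x_1^2$ and $(y_0+y_1)^2=y_0^2+y_1^2$, one finds that for any two affine points $(x_0:y_0:1)$ and $(x_1:y_1:1)$ of the conic
\[
  Q(x_0+x_1:y_0+y_1:1)=Q(x_0:y_0:1)+Q(x_1:y_1:1)+c+h(x_0y_1+x_1y_0)=c+h(x_0y_1+x_1y_0),
\]
the last equality holding since the two points lie on the conic.

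If $h=c=0$, this identity gives $Q(x_0+x_1:y_0+y_1:1)=0$ for every pair of affine points of the conic, so the point $(x_0+x_1:y_0+y_1:1)$ again lies on the conic; hence the conic is a translation oval.

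For the converse, assume the conic is a translation oval. Taking $(x_1:y_1:1)$ equal to any affine point $(x_0:y_0:1)$ of the conic (one exists since $q\ge 4$), the translation property forces $(0:0:1)$ to lie on the conic, so $c=Q(0:0:1)=0$. With $c=0$ the identity reduces to $h(x_0y_1+x_1y_0)=0$ for all pairs of affine points of the conic. Suppose $h\ne 0$; then $x_0y_1=x_1y_0$ for every such pair. Since $(0:0:1)$ is itself one of the at least $q-1\ge 3$ affine points, choose two further affine points $(x_0:y_0:1)\ne(x_1:y_1:1)$, both distinct from $(0:0:1)$. The relation $x_0y_1=x_1y_0$ then says that $(0:0:1)$, $(x_0:y_0:1)$ and $(x_1:y_1:1)$ are three distinct collinear points of the oval, a contradiction. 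Hence $h=0$ as well, completing the proof.

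The computational part is the characteristic-$2$ expansion, where one must verify that after cancellation the only surviving terms besides the (vanishing) values $Q(x_i:y_i:1)$ are the constant $c$ and the cross term $h(x_0y_1+x_1y_0)$. The one place needing care is the affine-point count: one must appeal to the oval axiom — a line meets an oval in at most two points, used first for $V(z)$ to guarantee enough affine points and then again for the line through the three points above — which is exactly where the hypothesis $q=2^n\ge 4$ enters.
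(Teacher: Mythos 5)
Your proof is correct and follows essentially the same route as the paper's: expand the quadratic form at $(x_0+x_1:y_0+y_1:1)$ in characteristic $2$ so that only $c$ and the cross term $h(x_0y_1+x_1y_0)$ survive, then use the oval axiom to produce two affine points with $x_0y_1\neq x_1y_0$. Your explicit counting of affine points (hence the $q\ge 4$ caveat) and the collinearity-with-$(0:0:1)$ argument simply make precise a step the paper asserts without justification, so there is no substantive difference in approach.
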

\begin{proof}
  A translation oval must contain $(0:0:1)$ so $c=0$ is necessary.  Since the conic is an oval it contains a pair of points $(x_0:y_0:1)$ and $(x_1:y_1:1)$ such that $x_0y_1 \neq x_1y_0$.  Thus $(x_0+x_1:y_0+y_1:1)$ is also on the oval and
  \begin{align*}
    ax_0^2+by_0^2 + fy_0 + gx_0 +hx_0y_0 &= 0, \\
    ax_1^2+by_1^2 + fy_1 + gx_1 +hx_1y_1 &= 0, \\
    a(x_0+x_1)^2+b(y_0+y_1)^2 + f(y_0+y_1) + g(x_0+x_1) +h(x_0+x_1)(y_0+y_1) &= 0. \\
  \end{align*}
  The sum of these three equations is
  \[
    h(x_0y_1+x_1y_0) = 0
  \]
  and since $x_0y_1+x_1y_0 \neq 0$ we must have $h=0$.
  
 Conversely if $c=h=0$ and $(x_0:y_0:1)$ and $(x_1:y_1:1)$ are on the conic, then
  \begin{align*}
    ax_0^2 + by_0^2 + fy_0 + gx_0 &= 0, \\
    ax_1^2 + by_1^2 + fy_1 + gx_1 &= 0.
  \end{align*}
The quadratic  form evaluated at $(x_0+x_1:y_0+y_1:1)$ is
  \begin{align*}
    a(x_0+x_1)^2 &+ b(y_0+y_1)^2 + f(y_0+y_1) + g(x_0+x_1)  \\
    &= (ax_0^2 + by_0^2 + fy_0 + gx_0) + (ax_1^2 + by_1^2 + fy_1 + gx_1) &= 0
  \end{align*}
  so the conic is a translation oval.
\end{proof}
We will call two translation ovals which intersect only in $(0:0:1)$ {\em trivially intersecting}.

\begin{corollary}\label{oval_spread}
  For every $q=2^n$ there exists a trivially intersecting pencil of non-singular conic translation ovals in $\PG(2,q)$.
\end{corollary}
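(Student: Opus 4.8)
The plan is to combine \cref{ovals_as_subspaces} with one explicit pencil. By \cref{ovals_as_subspaces}, a non-singular conic is a translation oval exactly when its defining quadratic form has the shape $ax^2+by^2+fyz+gxz$; in particular every such conic passes through $(0:0:1)$, because the $z^2$-coefficient vanishes. So it suffices to exhibit a pencil of non-singular conics of this shape whose members meet pairwise only in $(0:0:1)$. I would start from the standard conic translation oval $Q_0 = V(y^2+xz) = \{(x^2:x:1):x\in\F_q\}\cup\{(1:0:0)\}$, note that its tangent line at $(0:0:1)$ is $V(x)$, and form the pencil spanned by $y^2+xz$ and $x^2$, that is, the conics $C_t = V(tx^2+y^2+xz)$ for $t\in\F_q$ (the remaining member $V(x^2)$ of the linear pencil being the doubled tangent line).

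Next I would check that each $C_t$ is a non-singular conic translation oval. The polar bilinear form of $tx^2+y^2+xz$ sends $\bigl((x_1,y_1,z_1),(x_2,y_2,z_2)\bigr)$ to $x_1z_2+x_2z_1$, whose radical is the line $\langle(0:1:0)\rangle$, and $tx^2+y^2+xz$ evaluates to $1\neq 0$ there; hence the form is non-degenerate and $C_t$ is a non-singular conic, i.e.\ an oval of $q+1$ points with nucleus $(0:1:0)$. Its $z^2$- and $xy$-coefficients are $0$, so \cref{ovals_as_subspaces} certifies that $C_t$ is a translation oval. Then I would check trivial intersection: if $t\neq t'$ and a point lies on both $C_t$ and $C_{t'}$, subtracting the equations gives $(t+t')x^2=0$, hence $x=0$ (since $t+t'\neq 0$ in characteristic $2$), and then $y^2=0$, leaving only $(0:0:1)$; conversely $(0:0:1)$ lies on every $C_t$. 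Thus $\{C_t:t\in\F_q\}$ is a trivially intersecting pencil of $q$ non-singular conic translation ovals.

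The one real point of care is arranging the pencil so that the residual intersection genuinely collapses to a single point: for a generic-looking pencil such as $V(x^2+\lambda y^2+xz)$ the members $\lambda=0$ and $\lambda=1$ already share two points, $(0:0:1)$ and $(1:0:1)$. The trick is to take the varying term to be the square of a single coordinate, $x^2$, whose zero locus is the common tangent line of the base conic at $(0:0:1)$; then the difference of any two members is a scalar multiple of $x^2$, the residual intersection is forced onto that tangent line, and the conic meets that line only at $(0:0:1)$. Verifying non-degeneracy of each member — equivalently, that the unique degenerate member of the linear pencil is the doubled line $V(x)$ — is then the last routine step. (A Galois-theoretic variant prescribing three further base points forming a conjugate triple in $\PG(2,q^3)$ via the irreducible cubic of \cref{cubic_lemma} would also produce such a pencil, but the tangent-pencil construction above is shorter and self-contained.)
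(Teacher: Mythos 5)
Your local computations are all correct: each $C_t = V(tx^2+y^2+xz)$ is a non-singular conic, is a translation oval by \cref{ovals_as_subspaces}, and any two of them meet only in $(0:0:1)$. The gap is that this family is not what the corollary asserts. The pencil spanned by $y^2+xz$ and $x^2$ has $q+1$ members, and one of them is the doubled tangent line $V(x^2)$; so what you have produced is a pencil containing only $q$ non-singular conic translation ovals, not a ``pencil of non-singular conic translation ovals.'' This cannot be repaired inside your pencil: a pencil is determined by any two of its members, so $V(x^2)$ is unavoidably its $(q+1)$st member no matter which generators you choose. The defect is not merely terminological, because the corollary is used, via \cref{fact:spreads} and \cref{pg_map}, to produce a spread of $\F_2^{2n}$: one needs $q+1$ pairwise trivially intersecting members whose affine parts cover all of $\F_q^2$. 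Your $q$ conics cover only $q(q-1)$ nonzero affine points, missing the $q-1$ nonzero points of the line $x=0$; the only way to complete the spread within your pencil is to adjoin that line, and then the resulting translation plane shares the parallel class $\{x=c\}$ with the plane from the line spread, so the two planes are not \orthogoval{} and \cref{pg_map} and \cref{orthog_ag} break down. In effect, forcing the entire degree-4 base locus of the pencil onto the single rational point $(0:0:1)$ forces a degenerate member into the pencil.

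The fix is exactly the variant you set aside in your last parenthesis, which is the paper's proof: keep $(0:0:1)$ as the only rational base point but place the residual three base points at a conjugate triple in $\PG(2,q^3)$. Concretely, take $x^3+bx+c$ irreducible over $\F_q$ (it exists by \cref{cubic_lemma}) and set $\phi = x^2+yz$, $\chi = y^2+byz+cxz$. Every member $\alpha\phi+\beta\chi$ has zero $z^2$- and $xy$-coefficients, hence is a translation oval by \cref{ovals_as_subspaces}; the only candidate singular point of $\alpha\phi+\beta\chi$ is $(\alpha+\beta b:\beta c:0)$, and it does not lie on the conic because $x^3+b^2x+c^2$ is irreducible, so \emph{all} $q+1$ members are non-singular; and $\phi=\chi=0$ has $(0:0:1)$ as its only solution in $\PG(2,q)$ because the other three base points form the conjugate triple determined by the irreducible cubic. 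That yields the full trivially intersecting pencil (equivalently, the conic spread) that the statement and its later applications require.
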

\begin{proof}
  Let $p = x^3 + bx + c$ be an irreducible cubic equation in $\mathbb{F}_q[x]$ (note $c \neq 0$). We note that $x^3 + b^2x + c^2$ is also irreducible.  Define conics
    \begin{align*}
      \phi &: x^2 + yz, \\
      \chi &: y^2 + byz + cxz.
    \end{align*}
    Both these satisfy \cref{ovals_as_subspaces} so they are translation ovals.  The conics determine the pencil  
    \[
      \{\alpha \phi + \beta \chi: (\alpha:\beta) \in \PG(1,q)\}.
      \]
      The partial derivatives of $\alpha \phi + \beta \chi$ are all zero at the point $(\alpha+\beta b:\beta c:0)$ and this is not on the conic because $x^3 + b^2x + c^2$ is irreducible and thus all conics in the pencil are non-singular.

      By the irreducibility of $p$, the equation $\phi(x,y,z) = \chi(x,y,z) = 0$ in $\PG(2,q)$ holds if and only if $(x:y:z) = (0:0:1)$.  The other three points of intersection are a conjugate triple in $\PG(2,q^3)$. Every conic in the pencil contains these four points, thus the pencil of conics $\{\alpha \phi + \beta \chi: (\alpha:\beta) \in \PG(1,q)\}$ meet inside $\PG(2,q)$ only in $(0:0:1)$ and are otherwise disjoint.
\end{proof}

In $\F_{2}^{2n}$, a set of $n$-dimensional spaces which pairwise intersect only in the origin is a {\em spread}.  The set of these spaces and their cosets form an affine plane of order $2^n$ which is called a {\em translation plane} because the group of all translations is a transitive automorphism group \cite{mr1434062}.  The Desarguesian affine plane of order $q=2^n$ can be constructed this way using the spread of 1-dimensional spaces of $\F_q^2$ which is isomorphic to  $\F_{2}^{2n}$ as a $\F_2$-vector space.  We call this the {\em line spread}.  These 1-dimensional spaces correspond to the lines $\alpha x + \beta y$ for $(\alpha:\beta) \in \PG(1,q)$.  They alternatively can be constructed from the cyclotomic cosets in $\F_{q^2}^*$.  If $\omega$ is a primitive element of $\F_q^2$, the set
\[
  C_0 = \langle \omega^{q+1} \rangle = \{\omega^{j(q+1)}: 0 \leq j < q-1\}
\]
is a multiplicative subgroup of $\F_{q^2}^*$.  Its cosets are 
\begin{equation}\label{line_spread}
  C_i = \omega^i \langle \omega^{q+1} \rangle
\end{equation}
for $0 \leq i \leq q$.  When we adjoin $0$ to each coset we get exactly the set of 1-dimensional subspaces of $\F_q^2$, a spread.  Non-Desarguesian translation planes can also be constructed from spreads \cite{mr1434062}.

We want to understand when the planes constructed from two spreads are \orthogoval{}.  Let $\mathcal{S} = \{S_i\}$ and $\mathcal{T} = \{T_j\}$ be two spreads in $\F_{2}^{2n}$.  If $x,y,z \in (u + S_i) \cap (v + T_j)$ then $0, y-x, z-x  \in S_i \cap T_j$ and conversely.  This gives the following result.
\begin{lemma}\label{fact:spreads}
  The affine planes constructed from $\mathcal{S}$ and $\mathcal{T}$ are \orthogoval{} if and only if  $|S_i \cap T_j| \leq 2$ for all $i$ and $j$.   
\end{lemma}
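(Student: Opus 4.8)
The plan is to reduce the statement to the purely linear-algebraic fact about the spread components stated just before the lemma, by unwinding what ``orthogoval'' means for two translation planes sharing the point set $\F_2^{2n}$. First I would recall from the construction that a line of the plane built from $\mathcal{S} = \{S_i\}$ is precisely a coset $u + S_i$, and likewise a line of the plane built from $\mathcal{T} = \{T_j\}$ is a coset $v + T_j$; hence the two planes are orthogoval if and only if $|(u+S_i)\cap(v+T_j)| \le 2$ for all $u,v$ and all $i,j$. The content of the lemma is then that one may as well take $u = v = 0$.

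For the forward direction this is immediate: since every spread component is an $\F_2$-subspace and so contains $0$, the component $S_i$ is itself a line of the first plane and $T_j$ a line of the second, so the orthogoval property applied to this one pair of lines gives $|S_i \cap T_j| \le 2$ directly. For the converse I would argue by contraposition, which is exactly the ``and conversely'' remark preceding the statement: if some line $u + S_i$ meets some line $v + T_j$ in three distinct points $x, y, z$, then, using that $S_i$ and $T_j$ are subspaces, the differences $y - x$ and $z - x$ lie in both $S_i$ and $T_j$, so $\{0,\, y-x,\, z-x\}$ is a set of three distinct elements of $S_i \cap T_j$, forcing $|S_i \cap T_j| \ge 3$.

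The argument is essentially a bookkeeping exercise, so I do not expect a serious obstacle; the only points needing care are that the three intersection points $x,y,z$ really do yield three \emph{distinct} differences $0,\, y-x,\, z-x$ (which holds because $x,y,z$ are pairwise distinct and subtraction is injective), and the harmless observation that the whole configuration is symmetric in $\mathcal{S}$ and $\mathcal{T}$, so bounding $|(u+S_i)\cap(v+T_j)|$ for one line from each plane is exactly the orthogoval condition and nothing more is required.
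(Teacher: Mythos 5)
Your proof is correct and is essentially the paper's own argument: the paper proves the lemma via exactly the observation that $x,y,z \in (u+S_i)\cap(v+T_j)$ gives $0,\,y-x,\,z-x \in S_i\cap T_j$ and conversely, which is your contrapositive step, and your forward direction (taking $S_i$, $T_j$ themselves as lines through $0$) is just the trivial instance of the same correspondence.
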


\cref{oval_spread} shows that there exists a ``spread'' of non-singular conics.  The translation plane constructed from this spread is Desarguesian.  

\begin{definition}
A map $f$ on $\AG(2,q)$ which is an $\F_2$-linear bijection and maps lines to ovals is {\em affine ovalinear}.
\end{definition}

\begin{lemma}\label{lemma_ovalinear}
If $f$ is an affine ovalinear map on $\AG(2,2^n)$ then the affine planes $\AG(2,2^n)$ and $\AG(2,2^n)^{f}$ are \orthogoval{} and both Desarguesian.
\end{lemma}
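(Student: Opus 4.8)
The plan is to treat the two assertions in turn; both reduce quickly to unwinding the definition of \emph{affine ovalinear}. Write $q=2^n$ and regard the points of $\AG(2,q)$ as the affine points of $\PG(2,q)$, so the point set is $V\cong\F_2^{2n}$. By definition the lines of $\AG(2,q)^{f}$ are the sets $f(\ell)$ as $\ell$ ranges over the lines of $\AG(2,q)$; since $f$ is a bijection of $V$, the assignment $\ell\mapsto f(\ell)$ is simply transport of the incidence structure of $\AG(2,q)$ along $f$, so $\AG(2,q)^{f}$ is genuinely an affine plane of order $q$ and $f$ is an incidence isomorphism from $\AG(2,q)$ onto $\AG(2,q)^{f}$. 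Since being Desarguesian is an isomorphism invariant of planes and $\AG(2,q)$ is Desarguesian by construction, this already yields that $\AG(2,q)^{f}$ is Desarguesian as well, and I would record that observation first because it is also what makes the words ``\orthogoval{}'' and ``Desarguesian'' applicable to $\AG(2,q)^{f}$ at all.

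It then remains to check that the two planes are \orthogoval{}. A line of $\AG(2,q)$ is an affine line $m$, and a line of $\AG(2,q)^{f}$ has the form $f(\ell)$ for an affine line $\ell$. Because $f$ is affine ovalinear, $f(\ell)$ lies inside an oval of $\PG(2,q)$ and hence contains no three collinear points; in particular $|m\cap f(\ell)|\le 2$, which is exactly the defining condition for $\AG(2,q)$ and $\AG(2,q)^{f}$ to be \orthogoval{} once $m$ and $\ell$ are allowed to vary. One can also route this through \cref{fact:spreads}: since $f$ is $\F_2$-linear it carries the line spread $\{C_i\}$ to a set $\{f(C_i)\}$ of $n$-dimensional subspaces still pairwise meeting only in $0$, hence a spread, whose cosets are precisely the lines of $\AG(2,q)^{f}$; so $\AG(2,q)^{f}$ is the translation plane of $\{f(C_i)\}$, and by \cref{fact:spreads} it suffices that $|C_i\cap f(C_j)|\le 2$ for all $i,j$, which again holds because $f(C_j)$ sits in an oval. (For $q=2$ the statement is degenerate, since all affine lines have two points.)

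I do not expect a genuine obstacle: the argument is essentially bookkeeping. The only points needing care are the reading of ``$f$ maps lines to ovals'' — an affine line has $q$ points while an oval has $q+1$, so this must mean that each $f(\ell)$ is \emph{contained in} an oval, which supplies the no-three-collinear input we use — and the observation that $\{f(\ell)\}$ really does form the line set of an affine plane. It is worth noting that $\F_2$-linearity of $f$ is used nowhere in the two conclusions themselves; it matters only for the optional spread description of $\AG(2,q)^{f}$, which is what ties this lemma to the translation-plane constructions appearing later.
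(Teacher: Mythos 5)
Your proposal is correct and follows essentially the same argument as the paper: the bijection $f$ makes $\AG(2,2^n)^{f}$ isomorphic to $\AG(2,2^n)$ and hence Desarguesian, and since every line of $\AG(2,2^n)^{f}$ is (the image of a line under $f$, hence) an oval, each meets every line of $\AG(2,2^n)$ in at most two points, giving the \orthogoval{} property. The spread-based alternative and the remark about $\F_2$-linearity are harmless extras not needed for the lemma.
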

\begin{proof}
  Because $f$ is a bijection, the two planes $\AG(2,2^n)$ and $\AG(2,2^n)^{f}$ are isomorphic and therefore Desarguesian.  Since every line in $\AG(2,2^n)^{f}$ is an oval, the two planes are \orthogoval{}.
  \end{proof}
 We note that, by applying $f^{-1}$ to both planes, we have that $\AG(2,2^n)$ and $\AG(2,2^n)^{f}$ are \orthogoval{} if and only if $\AG(2,2^n)$ and $\AG(2,2^n)^{f^{-1}}$ are \orthogoval{}.  We will use \cref{lemma_ovalinear} to prove the next result and again in \cref{sec_three_affine}.

\begin{theorem} \label{pg_map}
The translation plane of order $q=2^n$ constructed from a trivially intersecting pencil of non-singular conics is Desarguesian and \orthogoval{} to the translation plane constructed from the line spread.
  \end{theorem}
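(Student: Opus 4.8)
The plan is to exhibit an explicit affine ovalinear map realizing the passage from the line spread to the conic pencil and then invoke \cref{lemma_ovalinear} and \cref{fact:spreads}. Concretely, I would set up coordinates so that the line spread is given by the cosets $C_i$ of $\langle \omega^{q+1}\rangle$ in $\F_{q^2}^*$ as in \cref{line_spread}, identifying $\F_q^2 \isom \F_{q^2} \isom \F_2^{2n}$. Under this identification the $\F_2$-linear bijections of $\AG(2,2^n)$ fixing the origin are exactly the $\F_2$-linear bijections of $\F_{q^2}$; I want to find such a map $f$ that carries each line $C_i$ to one of the conic translation ovals $\alpha\phi + \beta\chi$ of \cref{oval_spread} (suitably placed inside $\F_{q^2}$), and that carries the nucleus / point at infinity data consistently. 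The key computation is to check that the point sets $C_i$ are exactly the affine point sets of the conics after the coordinate change, and that the two spreads meet pairwise in at most two points.

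The main steps, in order, are: (1) Fix the ambient identification $\F_q^2 \isom \F_{q^2}$ and write the line spread explicitly as the additive translates of the cosets $C_i$; (2) Using \cref{oval_spread}, the pencil $\{\alpha\phi + \beta\chi\}$ of non-singular conic translation ovals all pass through $(0:0:1)$ and a conjugate triple over $\F_{q^3}$, so after removing the point $(0:0:1)$ (equivalently, working in the affine chart and translating to place $(0:0:1)$ at the origin) each conic becomes the zero set of a quadratic form vanishing at the origin, hence by \cref{ovals_as_subspaces} (with $c=h=0$) it is literally an $\F_2$-subspace of dimension $n$ in $\F_2^{2n}$ — this is why a trivially intersecting pencil of such conics is a spread; (3) Both spreads — the line spread and the conic spread — yield Desarguesian translation planes (the line spread gives the Desarguesian $\AG(2,q)$ by construction, and for the conic spread one shows it is coordinatized by the same field, e.g.\ because the pencil is linearly parametrized by $\PG(1,q)$ and the stabilizer acts appropriately, or by directly exhibiting a field isomorphism between the two kernels); (4) Since both planes are Desarguesian of the same order, they are isomorphic, and the isomorphism is realized by an $\F_2$-linear bijection $f$ of $\F_2^{2n}$ carrying the line spread to the conic spread; this $f$ is then affine ovalinear because its image spread consists of conic ovals, so by \cref{lemma_ovalinear} (or directly by \cref{fact:spreads}, since distinct conics in a trivially intersecting pencil meet any line spread component in at most two affine points — a line meets a conic in at most two points) the two planes are \orthogoval{}, and both are Desarguesian.

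The step I expect to be the main obstacle is (3)–(4): verifying that the translation plane from the conic pencil is actually \emph{Desarguesian}, and not merely some translation plane of order $q$. The naive argument ``both are order $q$, hence isomorphic'' fails since non-Desarguesian translation planes of order $2^n$ exist for $n \geq 4$. The honest route is to show that the conic spread is $\GL$-equivalent to the line spread: one uses that a trivially intersecting pencil of non-singular conics through a common conjugate triple is, up to $\PGL(3,q)$, the standard pencil, and that the stabilizer of the line spread contains the relevant field multiplications, so the conic spread is again closed under multiplication by a copy of $\F_q^*$ — i.e.\ it is a \emph{regular} spread, and regular spreads give exactly the Desarguesian plane. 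Equivalently, one can directly write down the map $f$ sending the parameter $x$ of the translation oval $\{(x^{2}:x:1)\}$-type point to the corresponding coset representative and check by a short explicit calculation (using the irreducible cubic $x^3 + bx + c$ from \cref{cubic_lemma}) that it is $\F_2$-linear and bijective; the routine but slightly fiddly part is matching up the conic coefficients $(\phi,\chi) = (x^2 + yz,\ y^2 + byz + cxz)$ with the cyclotomic description of $C_0 = \langle \omega^{q+1}\rangle$ and its cosets.
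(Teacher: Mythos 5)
The \orthogoval{} half of your plan is correct and is a genuinely different route from the paper's for that half: your step (2) is exactly right --- since $\phi$ and $\chi$ have $c=h=0$, each member $\alpha\phi+\beta\chi$ of the pencil has affine point set containing the origin and closed under addition (\cref{ovals_as_subspaces}), with exactly $q$ affine points, so trivial intersection makes these $q+1$ sets an $n$-dimensional spread of $\F_2^{2n}$ --- and then \cref{fact:spreads} together with the fact that a line meets a non-singular conic in at most two points gives orthogovality with no auxiliary map at all. The paper instead derives orthogovality from an explicit affine ovalinear map via \cref{lemma_ovalinear}; your spread-intersection argument is a clean substitute for that part.

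The gap is precisely where you flagged it: Desarguesian-ness is never actually proved. Route A asserts the conclusion rather than establishing it: the conic-spread components are visibly not closed under the standard scalar action (e.g.\ $\{(x,x^2)\st x\in\F_q\}$ is not), so you would need to exhibit a specific embedded copy of $\F_q$ in the $\F_2$-endomorphism ring fixing every component setwise; and ``the pencil is standard up to $\PGL(3,q)$'' does not help unless the projectivity fixes the line $z=0$ (otherwise it respects neither the affine chart nor the $\F_2$-structure nor the spread), and even then you would still owe a proof that the ``standard'' pencil's translation plane is Desarguesian, which is the original problem. Route B is an unexecuted plan, and the cyclotomic bookkeeping it envisions is unnecessary. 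The missing idea --- which is the entire content of the paper's proof --- is that the required bijection can be written down directly: $f((x:y:z)) = (\phi(x,y,z):\chi(x,y,z):z^2)$. Since $\phi,\chi$ have no $xy$ or $z^2$ terms, $f$ preserves the $z$-coordinate and is $\F_2$-linear on the affine points; a point lies on the conic $\alpha\phi+\beta\chi+\gamma z^2$ if and only if its image lies on the line $\alpha x+\beta y+\gamma z$, which forces injectivity (distinct points of the conic-spread plane are separated by the conics through them) and shows $f$ maps the conic-spread plane isomorphically onto $\AG(2,q)$. Desarguesian-ness is then immediate, and applying \cref{lemma_ovalinear} to $f^{-1}$ recovers orthogovality as well --- no regularity argument, classification of pencils, or isomorphism-implies-linear step is needed.
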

  \begin{proof}
    Suppose that quadratic forms $\phi$ and $\chi$ generate a trivially intersecting pencil of translation conics.  Let $\psi$ be the quadratic form $z^2$.  Define the map on $\PG(2,q)$, $f((x:y:z)) = (\phi(x,y,z):\chi(x,y,z):\psi(x,y,z))$. Because $\phi$ and $\chi$ are translation conics, $f$ preserves the $z$-coordinate of all points in $\PG(2,q)$ and is thus $\F_2$-linear on $\AG(2,q)$.  Suppose that $(x:y:z)$ is on $\alpha \phi + \beta \chi + \gamma \psi$.  Then $f((x:y:z)) $ is on the line $\alpha x + \beta y + \gamma z$.  Since each point can be specified by the lines it intersects with, $f$ is a bijection which takes ovals to lines.  Thus $f^{-1}$ is affine ovalinear and so by \cref{lemma_ovalinear} $\AG(2,2^n)$ and $\AG(2,2^n)^{f}$ are \orthogoval{} and both Desarguesian.
\end{proof}

    \begin{corollary}\label{orthog_ag}
      There exists a pair of Desarguesian \orthogoval{} affine planes of order $q=2^n$.
    \end{corollary}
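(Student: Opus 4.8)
The plan is to read the statement off directly from the two results just established, with essentially no new work. Recall that the Desarguesian affine plane $\AG(2,q)$ of order $q=2^n$ is exactly the translation plane built from the line spread of $\F_2^{2n}$ given in \cref{line_spread}. So it suffices to produce a second Desarguesian affine plane of order $q$ on the same pointset that is \orthogoval{} to this one, and the machinery of \cref{pg_map} already does precisely this.

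Concretely, I would proceed in two steps. First, invoke \cref{oval_spread}: for every $q=2^n$ there is a trivially intersecting pencil of non-singular conic translation ovals in $\PG(2,q)$. Restricting to the affine points (those with last coordinate nonzero) and adjoining the common point $(0:0:1)$, each conic in the pencil is an $n$-dimensional $\F_2$-subspace of $\F_2^{2n}$ — this is where the translation-oval condition of \cref{ovals_as_subspaces} is used, since it forces the defining form to be additive — and pairwise these subspaces meet only in the origin, so the pencil is a spread and defines a translation plane of order $q$ on the pointset of $\AG(2,q)$. Second, apply \cref{pg_map}: the translation plane obtained from a trivially intersecting pencil of non-singular conics is Desarguesian and \orthogoval{} to the translation plane coming from the line spread, i.e.\ to $\AG(2,q)$ itself. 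Putting the two together yields a pair of Desarguesian affine planes of order $q$ that are \orthogoval{}, which is the claim. For $q>2$ these two planes are automatically distinct, since a line of one meets a line of the other in at most two points whereas a plane of order $q$ has lines of size $q+1>2$; for $q=2$ every collection of affine planes on the same pointset is trivially \orthogoval{}.

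Since all of the substance has already been carried out — \cref{cubic_lemma} supplies the irreducible cubic $x^3+bx+c$ that makes the pencil trivially intersecting, \cref{ovals_as_subspaces} guarantees the relevant conics are translation ovals, and \cref{pg_map} converts the conic-to-line incidence reversal into the \orthogoval{} property via \cref{lemma_ovalinear} — there is no real obstacle left. The only point requiring care is to check that the hypotheses of \cref{oval_spread} and \cref{pg_map} line up exactly, namely that the pencil produced is simultaneously trivially intersecting \emph{and} non-singular; this is exactly what \cref{oval_spread} delivers, so the corollary follows immediately.
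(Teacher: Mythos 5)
Your proposal is correct and follows the paper's own (implicit) route exactly: the corollary is read off by combining \cref{oval_spread}, which supplies the trivially intersecting pencil of non-singular conic translation ovals, with \cref{pg_map}, which says the resulting translation plane is Desarguesian and \orthogoval{} to the plane from the line spread. The extra remarks (the affine parts of the conics forming an $n$-dimensional spread, distinctness for $q>2$) are harmless elaborations of what \cref{pg_map} already establishes.
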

We note that the pair of $\PG(2,q)$ constructed, one from the lines $\alpha x + \beta y + \gamma z$ and the other from the set of conics $\alpha \phi + \beta \chi + \gamma \psi $, is \orthogoval{} {\em except} for the line $z$.  Thus any three points, not {\em all} from the line $z$, which are collinear in one $\PG(2,q)$ are non-collinear in the other.  We will use this fact when discussing the connections to covering arrays in~\cref{sec_ca}.

\subsection{Sets of three \orthogoval{} affine planes}\label{sec_three_affine}

For a restricted set of $n$, we can construct sets of three \orthogoval{} affine planes of order $q=2^n$. As in the construction of two \orthogoval{} affine planes in \cref{sec_two_affine}, we need to control the roots of specific polynomials.





\begin{lemma} \label{lemma:roots-4} If $k$ is relatively prime to $n$, then
  for any $a$ and $b$ in $\F_{2^n}$, the equation $ax^{2^k} + bx = 0$ has at most two roots in $\F_{2^n}$, unless $a=b=0$.
\end{lemma}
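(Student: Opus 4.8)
The plan is to reduce the question to counting solutions of a single binomial equation in the cyclic group $\F_{2^n}^*$. First I would dispose of the two semidegenerate cases. If $a=0$ and $b\neq 0$, the equation reads $bx=0$, whose only solution is $x=0$; symmetrically, if $b=0$ and $a\neq 0$, it reads $ax^{2^k}=0$, again with sole solution $x=0$. In both cases there is exactly one root, which is at most two, so from now on assume $a\neq 0$ and $b\neq 0$.

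Since $x=0$ is always a root, it suffices to bound the number of \emph{nonzero} roots by one. For $x\neq 0$, dividing $ax^{2^k}+bx=0$ by $x$ and rearranging (using $-b=b$ in characteristic $2$) gives the equivalent condition $x^{2^k-1}=b/a$. The map $\psi\colon \F_{2^n}^*\to\F_{2^n}^*$ given by $\psi(x)=x^{2^k-1}$ is a homomorphism of the cyclic group $\F_{2^n}^*$ of order $2^n-1$, so every nonempty fibre of $\psi$ is a coset of $\ker\psi$, and $\lvert\ker\psi\rvert=\gcd(2^k-1,\,2^n-1)$. Hence $x^{2^k-1}=b/a$ has at most $\gcd(2^k-1,2^n-1)$ solutions.

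Finally I would invoke the standard identity $\gcd(2^k-1,\,2^n-1)=2^{\gcd(k,n)}-1$; since $\gcd(k,n)=1$ this equals $1$, so $\psi$ is injective, there is at most one nonzero root, and thus at most two roots in all. If one prefers to avoid quoting this identity, the injectivity of $\psi$ can be argued directly: if $x^{2^k-1}=y^{2^k-1}$ with $x,y\neq 0$, then $(x/y)^{2^k-1}=1$, so $x/y$ is a nonzero element of $\F_{2^k}$; being also in $\F_{2^n}$, it lies in $\F_{2^k}\cap\F_{2^n}=\F_{2^{\gcd(k,n)}}=\F_2$, forcing $x=y$. I do not foresee a genuine obstacle here: the entire content is the observation that raising to the $(2^k-1)$-th power is a bijection of $\F_{2^n}^*$ exactly when $\gcd(k,n)=1$, and the bookkeeping of the two cases where one coefficient vanishes.
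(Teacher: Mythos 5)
Your proposal is correct and follows essentially the same route as the paper: divide out the root $x=0$, reduce to $x^{2^k-1}=b/a$, and conclude uniqueness of the nonzero root from $\gcd(2^k-1,2^n-1)=2^{\gcd(k,n)}-1=1$. The only difference is presentational (your explicit treatment of the cases $a=0$ or $b=0$ and the optional field-intersection argument), so nothing further is needed.
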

\begin{proof}
  $x=0$ is one root; any other must satisfy $ax^{2^k-1} + b = 0$. If $a=0$ this has no roots unless $b=0$.  For $a \not= 0$, the sole root is the $(2^k-1)$th root of $a^{-1}b$.  That root exists and is unique since $\gcd(2^n-1, 2^k-1) = 2^{\gcd(k,n)} - 1 = 1.$
\end{proof}

\begin{lemma} \label{lemma:roots-again}
  If $3k$ is relatively prime to $n$, then
the polynomial $x^{2^k+1} + x +1$ has no roots in $\F_{2^n}$.
\end{lemma}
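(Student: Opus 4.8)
The plan is to argue by contradiction. Suppose $x \in \F_{2^n}$ satisfies $x^{2^k+1} + x + 1 = 0$. First I would dispose of the trivial cases: substituting $x=0$ gives $1=0$, and substituting $x=1$ gives $1+1+1 = 1 \neq 0$ in characteristic~$2$; hence any root satisfies $x \neq 0$ and $x \neq 1$. Dividing the relation $x^{2^k+1} = x+1$ by $x$ then rewrites it as
\[
  x^{2^k} = 1 + x^{-1}.
\]

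Next I would apply the Frobenius-type automorphism $t \mapsto t^{2^k}$ of $\F_{2^n}$ twice more and track the resulting rational expressions in $x$. Raising the displayed identity to the $2^k$ power and using that $t \mapsto t^{2^k}$ is additive and fixes $1$ gives $x^{2^{2k}} = 1 + x^{-2^k} = 1 + (1 + x^{-1})^{-1}$, which simplifies (using $x \neq 1$) to $x^{2^{2k}} = (x+1)^{-1}$. Applying $t \mapsto t^{2^k}$ once more gives $x^{2^{3k}} = (x+1)^{-2^k} = (x^{2^k}+1)^{-1} = (x^{-1})^{-1} = x$, where the last steps again use $x^{2^k} = 1 + x^{-1}$. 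Thus every root lies in the fixed field of $t \mapsto t^{2^{3k}}$, namely $\F_{2^{3k}}$.

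Finally, since the putative root also lies in $\F_{2^n}$, it lies in $\F_{2^{3k}} \cap \F_{2^n} = \F_{2^{\gcd(3k,n)}}$, and by the hypothesis that $3k$ is coprime to $n$ this is $\F_{2^{\gcd(3k,n)}} = \F_2$. But the only elements of $\F_2$ are $0$ and $1$, both of which were already excluded, a contradiction; hence $x^{2^k+1}+x+1$ has no root in $\F_{2^n}$.

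I do not anticipate a genuine obstacle here. The one nonobvious idea is to iterate the $2^k$-power map \emph{exactly} three times so that the orbit of $x$ closes up and returns to $x$; this is precisely the step where the factor of $3$ in the coprimality hypothesis $\gcd(3k,n)=1$ is used. The rest is the routine rational-function bookkeeping in characteristic~$2$ indicated above, together with the standard fact that $\F_{2^a} \cap \F_{2^b} = \F_{2^{\gcd(a,b)}}$.
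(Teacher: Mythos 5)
Your proof is correct and follows essentially the same route as the paper: apply the $2^k$-power Frobenius repeatedly to conclude a root would satisfy $x^{2^{3k}}=x$ and hence lie in $\F_{2^{3k}}\cap\F_{2^n}=\F_{2^{\gcd(3k,n)}}=\F_2$, which is impossible since neither $0$ nor $1$ is a root. The only cosmetic difference is that the paper reaches $r^{2^{3k}-1}=1$ via the identity $r^{2^{2k}+2^k+1}=1$, whereas you track the rational expressions $x^{2^k}=1+x^{-1}$, $x^{2^{2k}}=(x+1)^{-1}$, $x^{2^{3k}}=x$ directly.
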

\begin{proof}
  Suppose $r \in \F_{2^n}$ is a root of this polynomial and let $m=2^k$.  Then from
  \[ r^{m+1} = r +1 \] and taking the $m$th power (which is a Frobenius automorphism) of both sides we have
  \[ r^{m^2+m} = r^m + 1 \]
  so that
  \[ r^{m^2+m+1} = r^{m+1} + r = 1. \]
  Now taking the $(m-1)$th power gives $r^{m^3-1} = 1$, so that
  \[ r \in \F_{m^3} = \F_{2^{3k}}. \]
  But as we also have $r \in \F_{2^n}$ we conclude
  \[r \in \F_{2^{\gcd(3k,n)}} = \F_2. \]
  This is a contradiction, as neither 0 nor 1 is a root.
\end{proof}
A similar argument applies to a second polynomial we will encounter.
\begin{lemma} \label{lemma:more-roots}
  If $3k$ is relatively prime to $n$, then
the polynomial $x^{2^{2k}-1} + x^{2^k-1} +1$ has no roots in $\F_{2^n}$.
\end{lemma}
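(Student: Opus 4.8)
The plan is to mirror the argument used for \cref{lemma:roots-again}, turning the hypothesis $\gcd(3k,n)=1$ into a statement about Frobenius orbits. Write $m = 2^k$, so the polynomial is $x^{m^2-1} + x^{m-1} + 1$. First I would dispose of the value $x=0$: it gives $1 \neq 0$, so any putative root $r \in \F_{2^n}$ is nonzero. Multiplying the defining equation by $r$ then shows that $r$ is equivalently a root of $x^{m^2} + x^m + x$; that is,
\[
  r^{m^2} = r^m + r
\]
in characteristic $2$.

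Next I would apply the Frobenius automorphism $t \mapsto t^m$ to both sides, obtaining $r^{m^3} = r^{m^2} + r^m$, and substitute the previous identity to get $r^{m^3} = (r^m + r) + r^m = r$. Hence $r^{m^3-1} = 1$, so $r \in \F_{m^3} = \F_{2^{3k}}$. Since also $r \in \F_{2^n}$, we conclude $r \in \F_{2^{\gcd(3k,n)}} = \F_2$ by hypothesis. Finally I would check directly that neither $0$ nor $1$ is a root of $x^{m^2-1} + x^{m-1} + 1$ (both substitutions give $1$), contradicting $r \in \F_2$.

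I do not anticipate a genuine obstacle here; the computation is a routine variant of the preceding lemma. The only point requiring a word of care is the passage between $x^{m^2-1} + x^{m-1} + 1$ and $x^{m^2} + x^m + x$: multiplying by $x$ is harmless precisely because $0$ is not a root of the original polynomial, so no roots are created or lost over $\F_{2^n}^{*}$, and a root at $0$ has already been ruled out.
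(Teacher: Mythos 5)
Your proposal is correct and follows essentially the same route as the paper: apply the $2^k$-power Frobenius to the root equation, manipulate to obtain $r^{m^3-1}=1$ (the paper multiplies by $r^{m-1}$ where you multiply by $r$ up front and substitute), conclude $r\in\F_{2^{3k}}\cap\F_{2^n}=\F_2$, and rule out $0$ and $1$ directly. Your explicit remark that multiplying by $x$ is harmless because $0$ is not a root is a sound (if minor) extra precaution.
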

\begin{proof}
  Suppose $r \in \F_{2^n}$ is a root of this polynomial and let $m=2^k$.  Then from
  $r^{m^2-1} = r^{m-1} +1,$
  taking the $m$th power of both sides we have
  $r^{m^3-m} = r^{m^2-m} + 1.$
Multiplying by $r^{m-1}$ gives
  $r^{m^3-1} = r^{m^2-1} + r^{m-1} = 1.$
  So $r \in \F_{m^3}$, but as we also have $r \in \F_{2^n}$ we conclude
  $r \in \F_{2^{\gcd(3k,n)}} = \F_2.$
  This is again a contradiction, as neither 0 nor 1 is a root.
\end{proof}
Incidentally, this also shows (because it is $\F_2$-linear) that  $x^{2^{2k}} + x^{2^k} + x$ is a permutation polynomial for $\F_{2^n}$ when $n$ is relatively prime to $3k$.

Let $\phi_k(x,y) = (x^{2^k}+ y,\; y^{2^k}  + x + y)$ be a map defined on $\AG(2,2^n)$.  A simple expansion of terms shows
\begin{equation}\label{lemma:phi-step}
 \phi_{2k} = \rho \circ \phi_k\circ \phi_k \circ \rho
  \end{equation}
  where $\rho(x,y) = (y,x)$ is a linear transformation.

\begin{lemma} \label{lemma:phi-bijection}
  For all $k \geq 1$, if $3k$ is relatively prime to $n$, then $\phi_k$ is a bijection.
\end{lemma}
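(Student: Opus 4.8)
The plan is to exploit that $\phi_k$ is $\F_2$-linear on $\AG(2,2^n)$, so that it is a bijection precisely when its kernel is trivial. Indeed each coordinate of $\phi_k(x,y)=(x^{2^k}+y,\;y^{2^k}+x+y)$ is an $\F_2$-linear function of $(x,y)\in\F_{2^n}^2\cong\F_2^{2n}$, since the Frobenius power $x\mapsto x^{2^k}$ is $\F_2$-linear. Hence it suffices to show that $\phi_k(x,y)=(0,0)$ forces $x=y=0$.

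So suppose $x^{2^k}+y=0$ and $y^{2^k}+x+y=0$. From the first equation $y=x^{2^k}$; substituting into the second gives
\[
x^{2^{2k}}+x^{2^k}+x=0 .
\]
Now I would invoke the remark immediately following \cref{lemma:more-roots}: when $3k$ is relatively prime to $n$, the $\F_2$-linear polynomial $x^{2^{2k}}+x^{2^k}+x$ is a permutation polynomial of $\F_{2^n}$, so its only root is $0$. (Equivalently, one can argue directly: if $x\neq 0$, dividing by $x$ yields $x^{2^{2k}-1}+x^{2^k-1}+1=0$, contradicting \cref{lemma:more-roots}.) Therefore $x=0$, and then $y=x^{2^k}=0$, so the kernel of $\phi_k$ is trivial and $\phi_k$ is a bijection.

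There is essentially no serious obstacle here: the entire content has been front-loaded into \cref{lemma:more-roots} and the permutation-polynomial remark, and the only thing to verify is the $\F_2$-linearity and the elimination computing $x^{2^{2k}}+x^{2^k}+x=0$. I would avoid the alternative route via the recursion \eqref{lemma:phi-step}, since reducing $\phi_{2k}$ to $\phi_k$ changes the parameter $k$ to $2k$, and the hypothesis ``$3k$ coprime to $n$'' does not transparently pass to ``$6k$ coprime to $n$'' unless $n$ is odd; the direct kernel computation sidesteps this entirely.
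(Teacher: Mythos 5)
Your proof is correct and takes essentially the same route as the paper: the paper supposes $\phi_k(x,y)=\phi_k(s,t)$ and shows the difference $(x+s)$ satisfies $z^{2^{2k}}+z^{2^k}+z=0$, then invokes \cref{lemma:more-roots}, which is exactly your kernel computation repackaged (your explicit appeal to $\F_2$-linearity plus finiteness of the domain is the same content the paper uses implicitly). No gap; nothing further needed.
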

\begin{proof}
  Suppose $\phi_k(x,y) = \phi_k(s,t)$. Then
\begin{align*}
  x^{2^k} + y &= s^{2^k} + t \text{~and} \\
  y^{2^k} + x + y &= t^{2^k} + s + t
\end{align*}
giving
\begin{align*}
  (x+s)^{2^k}  &= y + t \text{~and} \\
  (y+t)^{2^k} &= (x+s) + (y+t).
\end{align*}
Substituting $y+t$ from the first equation into the second gives
\[ (x+s)^{2^{2k}} = (x+s) + (x+s)^{2^k}. \]
That is, $x+s$ is a root of $z^{2^{2k}} + z^{2^k} + z$. Then either $x=s$, or $x+s$ is
a root of $z^{2^{2k}-1}+z^{2^k-1}+1$, which \cref{lemma:more-roots} shows to be impossible.
\end{proof}

\begin{lemma} \label{lemma:phi-ortho}
If $3k$ is relatively prime to $n$,  then $\phi_k$ is affine ovalinear on $\F_{2^n}$.
\end{lemma}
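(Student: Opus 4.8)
The plan is to verify directly the two conditions in the definition of an affine ovalinear map: that $\phi_k$ is an $\F_2$-linear bijection of $\AG(2,2^n)$, and that it sends every line to an oval. The first condition is essentially free. Each coordinate of $\phi_k(x,y)=(x^{2^k}+y,\ y^{2^k}+x+y)$ is an $\F_2$-linear combination of $x$, $y$, $x^{2^k}$, $y^{2^k}$, and $t\mapsto t^{2^k}$ is an $\F_2$-linear (Frobenius) map on $\F_{2^n}$, so $\phi_k$ is $\F_2$-linear; bijectivity is exactly \cref{lemma:phi-bijection}, whose hypothesis $\gcd(3k,n)=1$ we are assuming. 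Hence everything comes down to showing that $\phi_k(\ell)$ meets every line $\ell'$ of $\AG(2,2^n)$ in at most two points.

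To prove this I would first reduce to lines through the origin. Write $\ell=P+L$ and $\ell'=Q+L'$ with $L,L'$ the one-dimensional $\F_{2^n}$-subspaces giving their directions. Since $\phi_k$ is $\F_2$-linear, $\phi_k(\ell)=\phi_k(P)+\phi_k(L)$, so $\phi_k(\ell)\cap\ell'$ is either empty or a coset of the $\F_2$-subspace $\phi_k(L)\cap L'$; thus it suffices to show $|\phi_k(L)\cap L'|\le 2$ for all pairs taken from the $q+1$ directions, namely the ``vertical'' direction $V=\{(0,t):t\in\F_{2^n}\}$ and the directions $L_m=\{(s,ms):s\in\F_{2^n}\}$ for $m\in\F_{2^n}$. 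Because $\phi_k$ is injective, each such intersection number equals the number of parameter values $s$ (respectively $t$) for which the image point lies in $L'$.

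Then I would run the short case analysis, using $\phi_k(s,ms)=(s^{2^k}+ms,\ m^{2^k}s^{2^k}+(1+m)s)$ and $\phi_k(0,t)=(t,\ t^{2^k}+t)$. For $\phi_k(L_m)\cap L_\lambda$, membership in $L_\lambda$ forces $(m^{2^k}+\lambda)s^{2^k}+(1+m+\lambda m)s=0$; since $\gcd(k,n)=1$, \cref{lemma:roots-4} bounds the number of solutions by two unless both coefficients vanish, and both vanish only when $\lambda=m^{2^k}$ and $m^{2^k+1}+m+1=0$, which \cref{lemma:roots-again} shows is impossible. For $\phi_k(L_m)\cap V$ the equation is $s^{2^k}+ms=0$, with nonzero leading coefficient, so again at most two roots by \cref{lemma:roots-4}; for $\phi_k(V)\cap L_\lambda$ the equation is $t^{2^k}+(1+\lambda)t=0$, at most two roots; and $\phi_k(V)\cap V$ forces $t=0$, a single point. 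In all cases $|\phi_k(\ell)\cap\ell'|\le 2$, so every $\phi_k(\ell)$ is an oval and $\phi_k$ is affine ovalinear.

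The only subtle point is the degenerate possibility in the first case, where $\phi_k(L_m)$ might coincide with some $L_\lambda$ (making the intersection all of $L_\lambda$, hence of size $q$); excluding this is precisely the role of \cref{lemma:roots-again}, and it is here that the hypothesis $\gcd(3k,n)=1$, rather than merely $\gcd(k,n)=1$, is needed. Everything else is a routine unwinding of the definitions and of \cref{lemma:phi-bijection}.
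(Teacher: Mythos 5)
Your proposal is correct and follows essentially the same route as the paper: compute $\phi_k$ on the lines through the origin and bound their intersections with the $q+1$ directions via \cref{lemma:roots-4}, with the degenerate coefficient case excluded by \cref{lemma:roots-again}, and bijectivity from \cref{lemma:phi-bijection}. The only difference is that you make explicit the coset argument reducing arbitrary lines to lines through the origin, which the paper leaves implicit; that is a welcome clarification but not a different proof.
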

\begin{proof}
It is clear that $\phi_k$ is $\F_2$-linear, and \cref{lemma:phi-bijection} shows it to be a bijection. We first find the images of lines through the origin.

The image of the line $x=0$ is the set
\begin{equation} \label{eq:line-0}
  \{ (y, y^{2^k}+y) \st y \in \F_{2^n} \}
\end{equation}
while the image of the line $y = ux$ for $u \in \F_{2^n}$ is
\begin{equation} \label{eq:line-non0}
  \{ (x^{2^k} + ux, u^{2^k} x^{2^k} + (u+1)x) \st x \in \F_{2^n} \}.
\end{equation}

Now we can look at the intersection of these sets with lines through the origin.
The line $x = 0$ meets the set~\eqref{eq:line-0} in the set
indexed by $\{ y \in \F \st y = 0 \}$, which has one member.  It meets the set~\eqref{eq:line-non0}
in the set indexed by
\[ \{ x \in \F \st x^{2^k} + ux  = 0 \} \]
which \cref{lemma:roots-4} shows to have at most 2 members.

For $w \in \F$, the line $y = wx$ meets the set~\eqref{eq:line-0} in the set
indexed by
\[ \{ y \in \F \st y^{2^k} + (w+1)y = 0 \}, \]
which \cref{lemma:roots-4} shows to have at most 2 members.
It meets the set~\eqref{eq:line-non0} in the set indexed by
\[ \{ x \in \F \st (u^{2^k}+w)x^{2^k} + (uw + u+1)x = 0 \}. \]
\cref{lemma:roots-4} applies again unless
\[ u^{2^k} + w = 0 = uw + u+1 \]
but then $w = u^{2^k}$ and $u^{2^k+1} + u + 1 = 0$. 
So $u$ is a root of $x^{2^k+1} + x + 1$, 
which \cref{lemma:roots-again} shows to be impossible.
\end{proof}

\begin{theorem}
  If $6k$ and $n$ are relatively prime and $q=2^n$,
  the standard plane $\AG(2,q)$, its image $\AG(2,q)^{\phi_k}$, and $\AG(2,q)^{\phi_k \circ \phi_k}$ are mutually \orthogoval{}.
\end{theorem}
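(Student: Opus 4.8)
The plan is to prove the three required pairwise orthogonalities separately, reducing each to \cref{lemma:phi-ortho}, \cref{lemma_ovalinear}, and the identity \cref{lemma:phi-step}. First I would note that $\gcd(6k,n)=1$ forces $\gcd(3k,n)=1$ and also $\gcd(3\cdot 2k,n)=\gcd(6k,n)=1$, so \cref{lemma:phi-ortho} may be invoked with parameter $k$ \emph{and} with parameter $2k$. Applying it with parameter $k$ shows $\phi_k$ is affine ovalinear, so by \cref{lemma_ovalinear} the planes $\AG(2,q)$ and $\AG(2,q)^{\phi_k}$ are \orthogoval{}.

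For the pair $\AG(2,q)$ and $\AG(2,q)^{\phi_k\circ\phi_k}$ I would apply \cref{lemma:phi-ortho} with parameter $2k$ to conclude that $\phi_{2k}$ is affine ovalinear, and then use \cref{lemma:phi-step} in the form $\phi_k\circ\phi_k = \rho\circ\phi_{2k}\circ\rho$ (valid since $\rho=\rho^{-1}$). The map $\rho(x,y)=(y,x)$ is an $\F_2$-linear involution that permutes the lines of $\AG(2,q)$, hence is a collineation of the affine plane and so carries ovals to ovals; therefore $\rho\circ\phi_{2k}\circ\rho$ is again an $\F_2$-linear bijection sending lines to ovals, i.e.\ affine ovalinear. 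Then \cref{lemma_ovalinear} gives that $\AG(2,q)$ and $\AG(2,q)^{\phi_k\circ\phi_k}$ are \orthogoval{}. This middle step is the one needing most care: one must be sure that conjugating an affine ovalinear map by the coordinate swap $\rho$ stays affine ovalinear, which rests precisely on $\rho$ being an \emph{affine} collineation (so that it preserves the property ``no three points collinear''), and one must check that the index $2k$ genuinely satisfies the hypotheses of \cref{lemma:phi-ortho}.

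For the remaining pair $\AG(2,q)^{\phi_k}$ and $\AG(2,q)^{\phi_k\circ\phi_k}$ I would use that applying a single bijection $g$ of the common pointset to both planes of a pair preserves every line--line intersection size, since $g(A)\cap g(B)=g(A\cap B)$ for bijective $g$; hence a pair of planes is \orthogoval{} if and only if its image under $g$ is (this is the same principle behind the remark following \cref{lemma_ovalinear}). Taking $g=\phi_k$ and the pair $\AG(2,q)$, $\AG(2,q)^{\phi_k}$, which is \orthogoval{} by the first step, the image pair is exactly $\AG(2,q)^{\phi_k}$, $\AG(2,q)^{\phi_k\circ\phi_k}$; concretely a line of the former is $\phi_k(\ell)$ and a line of the latter is $\phi_k(\phi_k(m))$ for lines $\ell,m$ of $\AG(2,q)$, and $|\phi_k(\ell)\cap\phi_k(\phi_k(m))|=|\ell\cap\phi_k(m)|\le 2$ by the first step. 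Having verified all three pairs, the three planes are mutually \orthogoval{}, completing the proof.
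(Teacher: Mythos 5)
Your proof is correct and follows essentially the same route as the paper: the same three pairwise checks, using \cref{lemma:phi-ortho} (with parameters $k$ and $2k$), \cref{lemma_ovalinear}, the identity \cref{lemma:phi-step}, and the observation that applying the bijection $\phi_k$ to both planes of an \orthogoval{} pair preserves orthogovality. The only (harmless) difference is in the pair $\AG(2,q)$, $\AG(2,q)^{\phi_k\circ\phi_k}$: you conjugate $\phi_{2k}$ by the collineation $\rho$ and verify the conjugate is still affine ovalinear, whereas the paper applies $\rho$ to the pair $\AG(2,q)$, $\AG(2,q)^{\phi_{2k}}$ and uses $\AG(2,q)^{\rho}=\AG(2,q)$ --- the same idea phrased on the planes rather than on the map.
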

\begin{proof}
   By \cref{lemma_ovalinear,lemma:phi-ortho},
   we have that $\AG(2,q)$ and  $\AG(2,q)^{\phi_k}$ are \orthogoval{}.
  Applying bijection $\phi_k$ to both of these planes shows that $\AG(2,q)^{\phi_k}$ and  $\AG(2,q)^{\phi_k \circ \phi_k}$ are also \orthogoval{}.
  It remains to show that the standard plane and  $\AG(2,q)^{\phi_k \circ \phi_k}$ are \orthogoval{}.
  \cref{lemma_ovalinear,lemma:phi-ortho} show that
  $\AG(2,q)$ is \orthogoval{} to $\AG(2,q)^{\phi_{2k}} = \AG(2,q)^{\rho \circ \phi_{k} \circ \phi_k \circ \rho}$.
  Since $\rho$ is a bijection
  and $\rho \circ \rho$ is the identity, we can first apply $\rho$ to those two planes to show that $\AG(2,q)^\rho$ is \orthogoval{} to $\AG(2,q)^{\rho \circ \phi_{k} \circ \phi_k}$, and then, as $\AG(2,q)^\rho = \AG(2,q)$,
  we have $\AG(2,q)$ \orthogoval{} to $\AG(2,q)^{\phi_k \circ \phi_k}$.
\end{proof}

Clearly $n$ must always be relatively prime to 6 for this construction.  For $k \in \{ 2^i3^j | i, j \geq 0\} = \{1,2,3,4,6,8,\dotsc\}$ this is sufficient.

\begin{corollary} \label{cor_3_affine_planes}
  If $n$ is relatively prime to 6, there exists a set of three mutually \orthogoval{} affine planes of order $2^n$.
\end{corollary}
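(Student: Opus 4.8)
The plan is to obtain this as the immediate specialization of the preceding theorem to the smallest admissible parameter, namely $k=1$. The only thing to check is that the hypothesis of that theorem is met: with $k=1$ the requirement ``$6k$ and $n$ are relatively prime'' reads ``$\gcd(6,n)=1$'', which is exactly the assumption of the corollary. Hence the theorem applies and produces the three planes $\AG(2,q)$, $\AG(2,q)^{\phi_1}$, and $\AG(2,q)^{\phi_1\circ\phi_1}$ with $q=2^n$, which are pairwise \orthogoval{} by construction; this is a set of three mutually \orthogoval{} affine planes of order $2^n$, as required.

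For completeness I would spell out that $k=1$ is a genuinely valid input to the machinery: the map $\phi_1(x,y)=(x^2+y,\;y^2+x+y)$ is defined on $\AG(2,2^n)$, and the lemmas it relies on hold in this case. Concretely, \cref{lemma:roots-4} with $k=1$ needs $\gcd(1,n)=1$, and \cref{lemma:roots-again,lemma:more-roots} with $k=1$ need $\gcd(3,n)=1$; both follow from $\gcd(n,6)=1$. So no work beyond verifying coprimality of $6$ and $n$ is needed.

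I do not expect any real obstacle here: all the substance --- the bijectivity of $\phi_k$, the fact that it is affine ovalinear, and the composition argument that handles the third plane --- is already contained in the preceding theorem and in \cref{lemma:phi-bijection,lemma:phi-ortho,lemma_ovalinear}. The corollary is purely a matter of instantiation. If one wished, the same argument with any $k$ of the form $2^i3^j$ would also work under $\gcd(n,6)=1$, but the single choice $k=1$ already suffices for the existence statement.
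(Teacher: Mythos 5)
Your proposal is correct and matches the paper's own (implicit) argument: the corollary is obtained by instantiating the preceding theorem at $k=1$, where the hypothesis $\gcd(6k,n)=1$ reduces exactly to $\gcd(6,n)=1$ (the paper notes more generally that any $k=2^i3^j$ works). No further comment needed.
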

If $n >3$ is prime then every $k$ satisfying $1 \leq k \leq n-1$ gives a triple of mutually \orthogoval{} affine planes.
We do not know whether or not the sets for different $k$ are isomorphic.

\subsection{Sets of more than three \orthogoval{} affine planes}
We can use the construction of translation affine planes from spreads to construct sets of more than three \orthogoval{} affine planes for $q=4$ and $8$.  The line spread $\mathcal{C}$ defined just after \cref{line_spread} constructs the standard Desarguesian affine plane on pointset $\F_{2^{2n}} \cong \F_{2}^{2n}$.  Other spreads which construct isomorphic translation planes can be generated by applying elements $M \in \GL(2n,\F_2)$. By \cref{fact:spreads}, if the intersections of the spaces in the image spread with the spaces from the original line spread all have size no more than two then the pair of affine planes constructed is \orthogoval{}.  Furthermore if this is true for $M^i$ for each $i$ satisfying $1 \leq i < s$ then the $s$ spreads $\{M^i(\mathcal{C}): 0\leq i < s\}$ generate a set of $s$ mutually \orthogoval{} affine planes. 

This gives an efficient method for computationally searching for a set of mutually \orthogoval{} affine planes of $\AG(2,2^n)$.  
Generate matrices $M$ from $\GL(2n,\F_2)$, retaining those for which the plane constructed from $M(\mathcal{C})$ is \orthogoval{} to that built from $\mathcal{C}$.  Given a large set $M_1, M_2, \dots, M_r$ of such matrices, construct a graph with vertex set $M_0 = I_{2n}, M_1, M_2, \dots, M_r$ in which vertices $M_i$ and $M_j$ are joined when $M_i(\mathcal{C})$ is \orthogoval{} to~$M_j(\mathcal{C})$. A clique of size $c$ in this graph is then a set of $c$ mutually \orthogoval{} affine planes in $\AG(2,2^n)$. This search yielded a set of seven mutually \orthogoval{} affine planes of order~$4$ (\cref{ex:q4sevenoo}) and a set of seven mutually \orthogoval{} affine planes of order~$8$ (\cref{ex:q8sevenoo}).

\begin{example}
  \label{ex:q4sevenoo}
Let $q=4$ and let $\omega$ be a root of the primitive polynomial $x^4+x+1$.
The cyclotomic cosets $C_i$ of $\F_{16}^*$ and the corresponding additive subgroups $S_i$ of $\F_2^4  \cong \F_{16}$ (using the $\F_2$-linear map $(t_0 t_1 t_2 t_3) \rightarrow t_0 \omega^3 + t_1 \omega^2 + t_2 \omega + t_3$) are given by
\[
\begin{array}{lll}
i & \mbox{$C_i$ (multiplicative)}	& \mbox{$S_i$ (additive)} 
								\\ \hline
0 & \{1,     \omega^5,\omega^{10} \}	& \{0000, 0001, 0110, 0111\}	\\
1 & \{\omega,\omega^6,\omega^{11} \}	& \{0000, 0010, 1100, 1110\}	\\
2 & \{\omega^2,\omega^7,\omega^{12} \}	& \{0000, 0100, 1011, 1111\}	\\
3 & \{\omega^3,\omega^8,\omega^{13} \}	& \{0000, 1000, 0101, 1101\}	\\
4 & \{\omega^4,\omega^9,\omega^{14} \}	& \{0000, 0011, 1010, 1001\}.
\end{array}
\]
The 4 lines of the parallel class containing $S_0$ are the cosets
\begin{align*}
L_{01} = 0000 + S_0 &= \{0000, 0001, 0110, 0111\}	\\
L_{02} = 0010 + S_0 &= \{0010, 0011, 0100, 0101\}	\\
L_{03} = 1000 + S_0 &= \{1000, 1001, 1110, 1111\}	\\
L_{04} = 1010 + S_0 &= \{1010, 1011, 1100, 1101\}.	
\end{align*}
The remaining 16 lines of the Desarguesian affine translation plane $\AG(2,4)$ are formed similarly from $S_1, \dots, S_4$.

Let
\[
M_{4} = 
\begin{bmatrix}
0 & 1 & 1 & 0 \\
0 & 0 & 0 & 1 \\
1 & 1 & 0 & 0 \\
0 & 0 & 1 & 1
\end{bmatrix}.
\]
Then $\{ M_4^{i}(\AG(2,4)): 0 \leq i < 7\}$ is a set of seven mutually \orthogoval{} affine planes of order~$4$.
\end{example}

With $\F_4$ generated by $x^2+x+1$ and primitive element $\zeta = \omega^5$, the planes $M_4^i(\AG(2,4))$ for $1 \leq i < 7$ correspond to the six pencil spreads of conics formed from the pairs
\begin{align*}
      \phi_1 &: x^2 + (\zeta+1) yz + \zeta xz &\chi_1 &:  y^2 + yz + xz;\\
  \phi_2 &: x^2 + (\zeta+1)yz + xz &\chi_2 &:  y^2 +  (\zeta+1)xz;\\
  \phi_3 &: x^2 +  xz +\zeta xy &\chi_3 &:  y^2 +  \zeta yz + \zeta xz; \\
  \phi_4 &: x^2 +  \zeta yz +\zeta xz &\chi_4 &:  y^2 +  yz + (\zeta+1) xz; \\
  \phi_5 &: x^2 +  yz + xz &\chi_5 &:  y^2 +  yz + \zeta  xz; \\
  \phi_6 &: x^2 +  \zeta yz  &\chi_6 &:  y^2 +  xz.\\  
\end{align*}
The union of the lines from all seven of the planes forms a $\SQS(16)$.  A decomposition of an $\SQS(16)$ into affine planes was previously known \cite{mathon_searching_1997} but this is an interesting method of construction.  It is unknown if the 1820 4-subsets of a 16-set can be partitioned into $\SQS(16)$s  \cite{MR4113538}; this would be a large set of Steiner Quadruple Systems.  Because the cyclotomic cosets $C_i$ are all disjoint, the collection of all cyclotomic cosets over these seven spreads forms the blocks of a resolvable Steiner triple system of order 15, a solution to Kirkman's schoolgirl problem of 1850 \cite{kirkman_query_1850}:
\begin{quote}
``Fifteen young ladies in a school walk out three abreast for seven days in succession: it is required to arrange them daily, so that no two shall walk twice abreast.''
\end{quote}
This design is the derived design of the $\SQS(16)$ at the point $0$. Because the planes have a transitive automorphism group, deriving at any other point gives an isomorphic resolvable $\STS(15)$.

\begin{example}
  \label{ex:q8sevenoo}
For $q=8$, let $\F_{64}$ be generated by a primitive element with minimum polynomial $x^6+x+1$.  Construct the Desarguesian affine translation plane $\AG(2,8)$.  Let
\[
M_{6} = 
\begin{bmatrix}
0 & 1 & 0 & 1 & 0 & 1 \\
0 & 0 & 1 & 0 & 1 & 1 \\
1 & 1 & 0 & 0 & 0 & 0 \\
0 & 0 & 1 & 1 & 1 & 1 \\
1 & 1 & 1 & 0 & 0 & 1 \\
0 & 0 & 1 & 1 & 1 & 0
\end{bmatrix}.
\]
Then $\{ M_6^{i}(\AG(2,8)): 0 \leq i < 7\}$ is a set of seven mutually \orthogoval{} affine planes of order~$8$.
\end{example}
Only three of these affine planes, those for $i=1,2,4$, are formed from a pencil spread of conics.  With $\F_8$ generated from $x^3 + x^2 + 1$ and $\omega$ as its root, the pencils of conics are formed from
\begin{align*}
      \phi_1 &: x^2 + \omega yz + \omega xz &\chi_1 &:  y^2 + (\omega^2+\omega+1)yz + (\omega+1)xz;\\
  \phi_2 &: x^2 + (\omega^2+\omega)yz + xz &\chi_2 &:  y^2 + yz + (\omega^2+1) xz;\\
  \phi_4 &: x^2 + (\omega^2)yz + (\omega+1) xz +\alpha xy &\chi_4 &:  y^2 +  (\omega^2+\omega)yz + (\omega^2+\omega) xz.
\end{align*}
The spreads for $i = 3,5,6$ are spreads of conics but are not pencils.

The procedure to find suitable matrices $M \in \GL(2n,\F_2)$ was implemented in~$C$, and the clique-finding algorithm was implemented in \emph{Mathematica}.
The random search was modified in order to allow the C program to run in reasonable time for $q=16$. Rather than seeking a suitable matrix $M$ directly, we choose the first $2n-1$ rows of $M$ randomly and calculate its product with the first $2n-1$ coordinates of each line~$S_i$. If no four such products from the nonzero points of a line $S_i$ occur in the set of products from a single line $MS_{i'}$, then we proceed to test all possible final rows for~$M$; otherwise we discard the partial matrix~$M$.

For $q=8$, the C program running on a 2013 iMac desktop takes on average less than 0.2 milliseconds to find each suitable matrix $M$, and a set of $1200$ such matrices was sufficient to find the first clique of size 7.  We later refined it to the solution given in \cref{ex:q8sevenoo}. No clique of size larger than 7 was found from a set of 50,000 such matrices.

For $q=16$, the C program takes on average 15 seconds to find each suitable matrix~$M$. A set of $1,000,000$ such matrices was collected over a number of months on several machines. This set is too large for the clique-finding algorithm of \emph{Mathematica} to handle, but it is straightforward to search in C for a pair of non-identity matrices $M$, $M'$ from the set for which $M(\AG(2,6))$ is \orthogoval{} to $M'(\AG(2,6))$; together with the identity matrix $I_8$, this would give a set of three mutually \orthogoval{} affine planes of order~$16$. Unfortunately, no such pair was found.  The set of $1,000,000$ suitable matrices does not contain an example $M$ for which $M^2$ is also suitable. It remains an open question as to whether there exists a set of more than two mutually \orthogoval{} affine planes of orders $2^n$ with $\gcd(n,6) > 1$ and $n>3$.

We summarize the results of this subsection, using \cref{cor_upperbound}.
  \begin{theorem} \label{thm_affine_sum}
    \begin{itemize}
      \item The size of a largest set of mutually \orthogoval{} affine planes in $\AG(2,4)$ is seven.
      \item There exists a set of seven mutually \orthogoval{} affine planes in $\AG(2,8)$.
      \end{itemize}
    \end{theorem}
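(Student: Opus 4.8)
Theorem \ref{thm_affine_sum} asserts two things: (i) the size of a largest set of mutually \orthogoval{} affine planes in $\AG(2,4)$ is seven; and (ii) there exists a set of seven mutually \orthogoval{} affine planes in $\AG(2,8)$.

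The plan is to combine the explicit constructions already recorded in Examples \ref{ex:q4sevenoo} and \ref{ex:q8sevenoo} with the upper bound from Corollary \ref{cor_upperbound}. For the lower bound in both cases, I would simply invoke the corresponding Example: Example \ref{ex:q4sevenoo} exhibits the matrix $M_4 \in \GL(4,\F_2)$ and asserts that $\{M_4^i(\AG(2,4)) : 0 \le i < 7\}$ is a set of seven mutually \orthogoval{} affine planes of order $4$, and Example \ref{ex:q8sevenoo} does the same for order $8$ via $M_6 \in \GL(6,\F_2)$. By Lemma \ref{fact:spreads}, checking that two spread-planes $M^i(\mathcal{C})$ and $M^j(\mathcal{C})$ are \orthogoval{} reduces to verifying $|S \cap T| \le 2$ for every pair consisting of a subspace $S$ in the first spread and a subspace $T$ in the second; since $\mathcal{C}$ has $q+1$ members each of size $q$, and there are $\binom{7}{2}$ pairs of planes, this is a finite computation that the paper reports having carried out. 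So the lower-bound half is essentially a citation to the preceding examples.

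For the upper bound when $q=4$: Corollary \ref{cor_upperbound} states that a set of $s$ mutually \orthogoval{} affine planes of order $q > 2$ satisfies $s \le \max\{7, q+2\}$, which for $q = 4$ gives $s \le \max\{7,6\} = 7$. Combined with the construction, the maximum is exactly seven. For $q = 8$ the same corollary gives only $s \le \max\{7,10\} = 10$, so we cannot conclude that seven is optimal — hence the theorem only claims existence of a set of seven in $\AG(2,8)$, not that it is largest, and this asymmetry in the statement is deliberate. I would therefore phrase the $q=8$ bullet purely as an existence result.

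The only genuine content beyond quoting earlier results is the verification that the specific matrices $M_4$ and $M_6$ work, i.e.\ that all $\binom{7}{2}$ pairs among $\{M^i(\mathcal{C})\}_{i=0}^{6}$ are pairwise \orthogoval{}. This is routine but not trivial: it is a direct finite check using Lemma \ref{fact:spreads}, and the main "obstacle" is really just that it was found by the computer search described in the surrounding text rather than by hand. In the write-up I would state that the correctness of Examples \ref{ex:q4sevenoo} and \ref{ex:q8sevenoo} is verified by applying Lemma \ref{fact:spreads} to each of the $\binom{7}{2}=21$ pairs of spreads $M_4^i(\mathcal C), M_4^j(\mathcal C)$ (respectively $M_6^i(\mathcal C), M_6^j(\mathcal C)$), a check that reduces to comparing intersection sizes of $\F_2$-subspaces and can be done by direct computation; and then the theorem follows by pairing these constructions with Corollary \ref{cor_upperbound} in the $q=4$ case.
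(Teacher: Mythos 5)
Your proposal is correct and matches the paper's own argument: the paper likewise obtains the lower bounds by citing the computer-verified constructions of Examples \ref{ex:q4sevenoo} and \ref{ex:q8sevenoo} (whose verification indeed rests on the spread criterion of Lemma \ref{fact:spreads}) and gets the exact value seven for $q=4$ from Corollary \ref{cor_upperbound}, while for $q=8$ only existence is claimed since the bound there is $10$. Nothing further is needed.
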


\section{Connections to covering arrays}\label{sec_ca}
Here we discuss the connection between sets of $s$ mutually \orthogoval{} planes, covering perfect hash families and covering arrays.  

\begin{definition} A {\em covering perfect hash family} $\CPHF_{\lambda}(n;t,k,q)$ is a $n \times k$ array of elements from $\mathbb{F}_q^t\setminus \vec{0}$ (equivalently the points of $\PG(t-1,q)$) such that, for each set $T$ of $t$ columns, there exist at least $\lambda$ rows whose entries in the columns of $T$ are linearly independent.  If the vector entries all have non-zero last coordinate (equivalently, they are the points of the affine subspace of $\PG(t-1,q)$) then the $\CPHF$ is a {\em Sherwood covering perfect hash family} $\SCPHF_{\lambda}(n;t,k,q)$.  
\end{definition}

\begin{definition} \label{def:CA} A {\em covering array} $\CA_{\lambda}(N;t,k,v)$ is a $N \times k$ array of elements from a $v$-set such that, for each set $T$ of $t$ columns and each $t$-tuple from the $v$-set, there exist at least $\lambda$ rows whose entries in the columns of $T$ match the $t$-tuple.  
\end{definition}

Covering perfect hash families are used to construct covering arrays. We extend the standard constructions \cite{MR3770276,MR2214495} by considering the index of the constructed covering array.
\begin{theorem} 
  Suppose that $C$ is a $\CPHF_{\lambda}(n; t, k,q)$ and $\lambda' \leq \lambda$. Then there exists a $\CA_{\lambda'}(n(q^t-1) + \lambda' ; t,k, q)$; and
there exists a $\CA_{\lambda'}(n(q^t-q)+\lambda'q; t, k , q)$ if $C$ is an $\SCPHF(n; t,k, q)$.
\end{theorem}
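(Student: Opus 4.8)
The plan is to extend the standard $\CPHF$-to-$\CA$ construction and then delete a carefully chosen multiset of repeated rows, keeping exactly $\lambda'$ copies of each.

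First I would set up the basic array. Write $C=(c_{i,j})$ with $1\le i\le n$, $1\le j\le k$ and each $c_{i,j}\in\F_q^t\setminus\{\vec 0\}$, and define an $(nq^t)\times k$ array $A$ over $\F_q$ whose rows are indexed by pairs $(i,x)$ with $1\le i\le n$ and $x\in\F_q^t$, by $A[(i,x),j]=\langle x,c_{i,j}\rangle$ for the standard inner product on $\F_q^t$. Call the $q^t$ rows with first coordinate $i$ \emph{block $i$}. I would then record the usual fact that $A$ is a $\CA_\lambda(nq^t;t,k,q)$: given a set $T$ of $t$ columns, the $\CPHF_\lambda$ property gives at least $\lambda$ indices $i$ for which $\{c_{i,j}:j\in T\}$ is a basis of $\F_q^t$, and for each such $i$ the map $x\mapsto(\langle x,c_{i,j}\rangle)_{j\in T}$ is a bijection $\F_q^t\to\F_q^t$, so within block $i$ each $t$-tuple over $\F_q$ occurs exactly once in the columns $T$; hence over these good blocks every $t$-tuple is covered at least $\lambda$ times.

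For the first claim I would then delete repeated all-zero rows. Since $\langle\vec 0,c_{i,j}\rangle=0$, every block contributes one all-zero row, so $A$ has exactly $n$ of them; as $\lambda'\le\lambda\le n$, deleting $n-\lambda'$ of them yields an array $A'$ with $nq^t-(n-\lambda')=n(q^t-1)+\lambda'$ rows. To check $A'$ is a $\CA_{\lambda'}$, fix a set $T$ of $t$ columns: a nonzero $t$-tuple is never produced by an all-zero row, so its coverage is unchanged and still at least $\lambda\ge\lambda'$, while the all-zero $t$-tuple on $T$ is produced by all $n$ of the all-zero rows (and possibly by further rows), so at least $\lambda'$ of the rows producing it survive. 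For the Sherwood case I would first normalise every entry of $C$ so that its last coordinate is $1$; rescaling each $c_{i,j}$ does not disturb linear independence of any subset, so $C$ is still an $\SCPHF_\lambda$. Now for each block $i$ and each $s\in\F_q$ the row $(i,(\vec 0,s))$ has $j$-th entry $\langle(\vec 0,s),c_{i,j}\rangle=s$, i.e.\ it is the constant-$s$ row, so $A$ contains exactly $n$ copies of each of the $q$ constant rows. Deleting $n-\lambda'$ copies of each constant row leaves an array $A''$ with $nq^t-q(n-\lambda')=n(q^t-q)+\lambda' q$ rows, and the same bookkeeping as above (non-constant tuples are untouched and covered at least $\lambda$ times; the constant-$a$ tuple on any $T$ is produced by all $n$ constant-$a$ rows plus possibly others, so at least $\lambda'$ survive) shows $A''$ is a $\CA_{\lambda'}(n(q^t-q)+\lambda' q;t,k,q)$.

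I do not expect a genuine obstacle here: the construction is routine and the only point that needs care is the coverage bookkeeping in the deletion steps, namely the observation that the all-zero (respectively each constant) $t$-tuple on a set $T$ of columns may be over-covered by rows coming from blocks in which the columns $T$ are linearly dependent, but is nonetheless covered by at least $n$ particular rows, one per block, so thinning those down to $\lambda'$ copies cannot break coverage. It is also worth remarking explicitly that the chain of inequalities $n\ge\lambda\ge\lambda'$ is exactly what makes all the deletions legal.
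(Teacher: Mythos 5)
Your proof is correct and follows essentially the same route as the paper: build the $nq^t$-row array from inner products with the $\CPHF$ rows, note that each set of $t$ columns has at least $\lambda$ ``good'' blocks covering every $t$-tuple, and then delete all but $\lambda'$ of the all-zero (respectively constant) rows, which the paper does in one line by citing the standard construction. The only slight imprecision is your claim that there are \emph{exactly} $n$ all-zero (resp.\ $n$ copies of each constant) rows, which could fail if some block's vectors do not span $\F_q^t$; since only ``at least $n$'' is needed for both the deletion count and the residual coverage, this does not affect the argument.
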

\begin{proof}
  See \cite{MR3770276,MR2214495} for the basic proof.  The extension to higher index is straightforward.  For a given set $T$ of $t$ columns, {\em any} row in which the entries from $T$ are linearly independent contributes one copy of every $t$-tuple over $\mathbb{F}_q$ to the covering array, so every $t$-tuple appears at least $\lambda$ times. Every row of a  $\CPHF$ contributes an all-zero row to the covering array.  Every row of a $\SCPHF$ contributes each of the $q$ constant rows to the covering array.  In each case all but $\lambda'$ can be deleted. 
\end{proof}

\begin{theorem}\label{thm_orthog2cphf}
  Suppose that a set of $s$ mutually \orthogoval{} Desarguesian projective (affine) planes exists.  Then there exists a $\CPHF_{s-1}(s;3,q^2+q+1,q)$ ($\SCPHF_{s-1}(s;3,q^2,q)$). 
\end{theorem}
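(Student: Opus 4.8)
The plan is to construct the required family explicitly, with one row for each plane and one column for each point of the common pointset. Let $\Pi_1,\dots,\Pi_s$ be mutually \orthogoval{} Desarguesian projective planes of order $q$ on a common pointset $\mathcal{P}$, so that $|\mathcal{P}|=q^2+q+1$. Since each $\Pi_i$ is Desarguesian, fix an isomorphism (collineation) $\theta_i$ from $\Pi_i$ onto the standard $\PG(2,q)$: a bijection $\mathcal{P}\to\PG(2,q)$ carrying the lines of $\Pi_i$ to lines. Form the $s\times(q^2+q+1)$ array $C$ whose entry in row $i$ and column $P$ is the point $\theta_i(P)$ of $\PG(2,q)$, viewed as a nonzero vector of $\F_q^3$ up to scalar (linear independence of a triple of such points does not depend on the choice of representatives).

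Next I would check the defining property. Fix any three columns $T=\{P,Q,R\}$. In row $i$, the entries $\theta_i(P),\theta_i(Q),\theta_i(R)$ are linearly dependent exactly when these three points of $\PG(2,q)$ are collinear, and since $\theta_i$ is a collineation this happens exactly when $P,Q,R$ are collinear in $\Pi_i$. So the number of rows of $C$ in which the entries of $T$ are linearly independent equals the number of planes in which $P,Q,R$ are \emph{not} collinear. Now suppose $P,Q,R$ were collinear in two distinct planes $\Pi_i$ and $\Pi_j$, on lines $\ell$ and $m$ respectively; then $\ell$ and $m$ would meet in the three points $P,Q,R$, contradicting \orthogoval{}ity. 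Hence $P,Q,R$ are collinear in at most one of the $s$ planes, so their entries are linearly independent in at least $s-1$ rows. As $T$ was arbitrary, $C$ is a $\CPHF_{s-1}(s;3,q^2+q+1,q)$.

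For the affine version the argument is the same modulo two cosmetic adjustments. Take $\Pi_1,\dots,\Pi_s$ to be mutually \orthogoval{} Desarguesian affine planes of order $q$ on a common pointset of size $q^2$, and let $\theta_i$ be an isomorphism onto $\AG(2,q)$. Identify a point $(x,y)$ of $\AG(2,q)$ with the vector $(x,y,1)\in\F_q^3$, whose last coordinate is nonzero, so the resulting $s\times q^2$ array is eligible to be an $\SCPHF$. Three such vectors are linearly independent precisely when the $3\times 3$ matrix with rows $(x_j,y_j,1)$ is nonsingular, which is exactly the condition that the three affine points are not collinear; with this substitution the preceding paragraph applies verbatim and yields an $\SCPHF_{s-1}(s;3,q^2,q)$.

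I do not anticipate a genuine obstacle: the only subtle point is that the Desarguesian hypothesis is used precisely (and only) to coordinatize the points of each plane over $\F_q$ via the collineations $\theta_i$, and the remaining content is the standard dictionary between collinearity of points in $\PG(2,q)$ or $\AG(2,q)$ and linear dependence of their coordinate vectors, combined with the observation already noted above that \orthogoval{}ity allows at most one plane to render any given triple collinear.
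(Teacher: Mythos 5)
Your proposal is correct and follows essentially the same route as the paper: index columns by the common pointset, put in row $i$ the coordinate vector of the point under an isomorphism of the $i$th plane onto a fixed Desarguesian plane, and note that \orthogoval{}ity forces any triple to be collinear in at most one plane, hence linearly independent in at least $s-1$ rows (with the last-coordinate-$1$ normalization handling the affine/$\SCPHF$ case). Your explicit remark that collinearity in two planes would give two lines meeting in three points just spells out what the paper invokes directly from the definition.
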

\begin{proof}
  Represent the points of $\PG(2,q)$ as length $3$ vectors in homogeneous coordinates over $\mathbb{F}_q$ and the points of $\AG(2,q)$ as length $3$ vectors in homogeneous coordinates over $\mathbb{F}_q$ with last entry $1$. For each $i$ satisfying $1 \leq i \leq s$, let $\phi_i$ be an isomorphism of the $i$th plane in the set onto the first plane.

 Let $C$ be the array with $s$ rows and columns indexed by the elements of the common point set of the planes.  Define the entries of $C$ by 
  \[
    C_{iz} = \phi_i(z).
  \]
  Let $T$ be a set of three columns corresponding to points $z_1$, $z_2$ and $z_3$. When these three points are non-collinear in the plane $i$, the corresponding entries in row $i$ are linearly independent.  If $z_1$, $z_2$ and $z_3$ appear on a line in one of the planes, then by the definition of \orthogoval{} they must be non-collinear in all other $s-1$ planes.  Thus there are at least $s-1$ rows with linearly independent entries in columns $z_1$, $z_2$ and $z_3$.
\end{proof}
The covering arrays constructed from these Sherwood covering perfect hash families can be extended by an additional two columns.
\begin{theorem}
  Suppose that a set of $s$ mutually \orthogoval{} Desarguesian affine planes exists.  Then there exists a $\CA_{s-1}(sq^3-q; 3, q^2+2 , q)$. 
  \end{theorem}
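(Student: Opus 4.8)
The plan is to start from the $\SCPHF_{s-1}(s;3,q^2,q)$ that \cref{thm_orthog2cphf} produces, enlarge it by two columns, and then feed it into the $\SCPHF\to\CA$ construction while tracking which rows coincide. Write $P$ for the common point set of the $s$ planes, fix for each $i$ an isomorphism $\phi_i$ of the $i$th plane onto the first plane, whose points we represent by length-three vectors over $\F_q$ with final entry $1$, and set $C_{ip}=\phi_i(p)$ as in the proof of \cref{thm_orthog2cphf}. Pick two distinct points $D_1,D_2$ on the line $z$ at infinity of the projective closure of the first plane, write them in homogeneous coordinates with final entry $0$, and append to $C$ two columns $n_1,n_2$ with $C_{i,n_1}=D_1$ and $C_{i,n_2}=D_2$ for every $i$; call the resulting $s\times(q^2+2)$ array $C'$. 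The heart of the argument is to choose $D_1,D_2$ (and the $\phi_i$) so that $C'$ still has the defining property of a $\CPHF_{s-1}$: every triple of its columns contains at least $s-1$ rows whose three entries are linearly independent over $\F_q$.

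Granting that, assemble the covering array exactly as in the Sherwood construction: take one row for each pair $(i,w)$ with $i\in\{1,\dots,s\}$ and $w\in\F_q^3$, putting $\langle w,C'_{ic}\rangle$ in column $c$. For $w$ on the ``$z$-axis'' $\{(0:0:c):c\in\F_q\}$ this row equals $c$ in each of the $q^2$ original columns and, since $D_1$ and $D_2$ have final coordinate $0$, equals $0$ in columns $n_1$ and $n_2$; in particular it is independent of $i$, so of each of these $q$ rows one keeps only $s-1$ copies. What remains is $s(q^3-q)$ ``generic'' rows together with $(s-1)q$ rows of the special constant type, i.e.\ exactly $sq^3-q$ rows. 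For any triple $T$ of columns the $\CPHF_{s-1}$ property gives at least $s-1$ indices $i$ for which the $3\times3$ matrix with rows $C'_{ic}$, $c\in T$, is invertible; for such an $i$ the map carrying $w$ to the triple of entries of row $(i,w)$ on $T$ is a bijection of $\F_q^3$ whose restriction to the $z$-axis has image precisely the triple of values realised on $T$ by the kept constant rows. Hence every triple of values on $T$ other than that special one is covered at least $s-1$ times by the generic rows, and the special triple is covered $s-1$ times by the constant rows, so we obtain a $\CA_{s-1}(sq^3-q;3,q^2+2,q)$.

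The only nonroutine point is that $C'$ is a $\CPHF_{s-1}$. Triples among the $q^2$ original columns are handled by \cref{thm_orthog2cphf}, and a triple $\{n_1,n_2,p\}$ is automatic, since $D_1$ and $D_2$ lie on the line $z$ while $C_{ip}$ does not, so the three vectors are independent in every one of the $s$ rows. The work is in a triple $\{n_1,p,p'\}$ with $p\neq p'$ in $P$: there $D_1,C_{ip},C_{ip'}$ are dependent exactly when $p$ and $p'$ lie on a common line of the parallel class $\phi_i^{-1}(\mathcal C_{D_1})$ of the $i$th plane, where $\mathcal C_{D_1}$ is the parallel class of the first plane through $D_1$. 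So what must be arranged is that the $s$ resolutions $\phi_1^{-1}(\mathcal C_{D_1}),\dots,\phi_s^{-1}(\mathcal C_{D_1})$ of $P$ are pairwise orthogonal, and likewise for $D_2$. For the pair of planes of \cref{pg_map} this is precisely the remark, recorded just after \cref{orthog_ag}, that the two associated copies of $\PG(2,q)$ are \orthogoval{} except for the line $z$, which disposes of the case $s=2$; for larger sets of planes one must produce two such mutually compatible orthogonal systems of parallel classes, and that step — not any part of the bookkeeping above — is where the Desarguesian geometry of the planes has to be exploited.
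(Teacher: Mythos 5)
Your covering-array bookkeeping follows the paper's route: append two columns indexed by points of the line $z$ to the array of \cref{thm_orthog2cphf}, observe that the rows coming from vectors supported only in the last coordinate are constant on the $q^2$ affine columns and zero on the two new ones, hence repeated across the $s$ rows of the hash family, and keep $s-1$ copies of each to reach $sq^3-q$ rows at index $s-1$. That part of your argument is correct, and indeed more explicit than the paper, which simply invokes the repeated-row structure of the Sherwood construction from \cite{MR3770276}.

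The genuine gap is the step you explicitly defer: you never establish that the enlarged $s\times(q^2+2)$ array is a $\CPHF_{s-1}(s;3,q^2+2,q)$ for general $s$. The paper closes exactly this point by asserting that the $s$ affine planes extend to $s$ projective planes that are \orthogoval{} \emph{except} for the line $z$; granting that, any three points not all on the line $z$ that are collinear in one completion are non-collinear in all the others, so every triple of columns meeting the retained infinite columns has at most one bad row, and this works for \emph{any} two retained points of the line $z$ (the paper keeps $(1:0:0)$ and $(0:1:0)$). Your reduction to the pairwise orthogonality of the resolutions $\phi_i^{-1}(\mathcal{C}_{D_1})$ and $\phi_i^{-1}(\mathcal{C}_{D_2})$ is precisely this property restricted to the two chosen directions, so you have isolated the crux correctly; but verifying it only for $s=2$, and only for the particular pair produced by \cref{pg_map} via the remark after \cref{orthog_ag}, does not prove the theorem as stated, which is quantified over arbitrary sets of $s$ mutually \orthogoval{} Desarguesian affine planes and is subsequently applied with $s=3$ and $s=7$ (\cref{cor_3_affine_planes}, \cref{thm_affine_sum}) to obtain the covering arrays in the corollary. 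To complete your argument you would need either the paper's claim about the projective completions, or at least a choice, in each plane, of two parallel classes so that equal-label classes from different planes never share two points; for the paper's examples this is plausible because all the planes are translation planes over a common $\F_2$-vector-space structure (each spread member of one spread meets all but exactly two members of a compatible spread nontrivially), but some such argument must actually be supplied, and your proposal leaves it open.
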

\begin{proof}
The affine plane embeds uniquely into the projective plane so we can extend the set of $s$ mutually \orthogoval{} affine planes to a set of $s$ projective planes which are \orthogoval{} {\em except} for the line $z$. From these we construct the $s \times (q^2+q+1)$ array $C$ as in the proof of \cref{thm_orthog2cphf} defined by
  \[
    C_{iz} = \phi_i(z).
  \]
Because the planes have the property that any three points not {\em all} on the line $z$ which are collinear in one are non-collinear in all the others, if we delete all but two of the columns of $C$ which are indexed by the points on the line $z$, the resulting array is a $\CPHF_{s-1}(s;3,q^2+2,q)$.

When constructing the covering array from this extended $\SCPHF$ we observe that the repeated rows arise from the multiplication of $r$ with the entry $z$ in $C$ where the vectors $r$ are the vectors with only the last position non-zero, see~\cite{MR3770276}.  The multiplication of these vectors $r$ with the two points kept from the line $z$ ($(1:0:0)$ and $(0:1:0)$ for example) yield all zeros so the rows are still repeated in the covering array extended by two columns and the appropriate number can still be deleted.
\end{proof}

From our constructions of sets of mutually \orthogoval{} Desarguesian projective planes in \cref{thm_pair_proj,thm_proj_sum} we can build the corresponding covering perfect hash families and covering arrays.
\begin{corollary}
  \begin{itemize}
  \item For all prime powers $q$ there exists a $\CA(2q^3-1;3,q^2+q+1,q)$.
  \item For $\lambda <4$ there exists a $\CA_{\lambda}(27\lambda+26;3,13,3)$.
    \end{itemize}
  \end{corollary}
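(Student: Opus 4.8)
The plan is to read off both parts from the chain of results already in hand, with no new ideas needed.

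For the first bullet, Theorem~\ref{thm_pair_proj} gives a set of $s=2$ mutually \orthogoval{} Desarguesian $\PG(2,q)$ for every prime power $q$. Feeding this set into Theorem~\ref{thm_orthog2cphf} produces a $\CPHF_{1}(2;3,q^2+q+1,q)$, and then applying the $\CPHF$-to-covering-array construction stated above with $n=2$, $t=3$, $k=q^2+q+1$ and $\lambda=\lambda'=1$ yields a $\CA_1\bigl(2(q^3-1)+1;\,3,q^2+q+1,q\bigr)$. Since $2(q^3-1)+1 = 2q^3-1$, this is precisely the claimed array.

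For the second bullet, write $\lambda\in\{1,2,3\}$ and set $s:=\lambda+1\le 4$. By Theorem~\ref{thm_proj_sum} there is a set of four mutually \orthogoval{} projective planes of order $3$; as $\PG(2,3)$ is the unique projective plane of order $3$, all of these are Desarguesian, and any $s$ of them form a set of $s=\lambda+1$ mutually \orthogoval{} Desarguesian projective planes. Theorem~\ref{thm_orthog2cphf} converts this into a $\CPHF_{\lambda}(\lambda+1;3,13,3)$, using $q^2+q+1=13$ when $q=3$. Applying the $\CPHF$-to-covering-array construction with $n=\lambda+1$, $q=3$, $t=3$, $k=13$ and $\lambda'=\lambda$ then gives a $\CA_{\lambda}\bigl((\lambda+1)(3^3-1)+\lambda;\,3,13,3\bigr)$, and the identity $(\lambda+1)\cdot 26+\lambda=27\lambda+26$ completes the proof.

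There is no real obstacle here: every ingredient is a theorem proved earlier in the paper. The only points that need a word of justification are the trivial observation that a sub-collection of mutually \orthogoval{} planes is again mutually \orthogoval{} (so that choosing exactly $\lambda+1$ of the four order-$3$ planes is legitimate), the fact that the relevant planes are Desarguesian (explicit in Theorem~\ref{thm_pair_proj}, and automatic for order $3$), and the two one-line arithmetic simplifications $2(q^3-1)+1=2q^3-1$ and $26(\lambda+1)+\lambda=27\lambda+26$. If anything deserves emphasis it is the deliberate choice to use the \emph{minimal} number $\lambda+1$ of planes needed to reach index $\lambda$, since it is this choice of $n=\lambda+1$ rows in the $\CPHF$ that makes $27\lambda+26$ — rather than the larger value obtained from using all four planes — come out of the construction.
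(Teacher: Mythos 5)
Your proposal is correct and matches the paper's intended derivation exactly: the paper states this corollary without a separate proof, deriving it directly from \cref{thm_pair_proj,thm_proj_sum} via \cref{thm_orthog2cphf} and the CPHF-to-covering-array theorem, which is precisely your chain. Your explicit observation that one should take only $\lambda+1$ of the four order-$3$ planes (so that $n=\lambda+1$ gives $(\lambda+1)\cdot 26+\lambda=27\lambda+26$) is the right reading of the parameters and fills in the only detail the paper leaves implicit.
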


From our constructions of sets of mutually \orthogoval{} Desarguesian affine planes summarized in \cref{orthog_ag} and \cref{thm_affine_sum} we can build the corresponding covering perfect hash families and covering arrays.
\begin{corollary}
  \begin{itemize}
  \item For all prime powers $q=2^n$ there exists a $\CA(2q^3-q;3,q^2+2,q)$.
  \item For all prime powers $q=2^n$ with $\gcd(n,6)=1$ and $\lambda < 3$, there exists a $\CA_{\lambda}(\lambda q^3+q^3-q;3,q^2+2,q)$.
  \item For $\lambda <7$ there exists a $\CA_{\lambda}(27\lambda+24;3,11,3)$.   
  \item For $\lambda <7$ there exists a $\CA_{\lambda}(64\lambda+60;3,18,4)$.
  \item For $\lambda <7$ there exists a $\CA_{\lambda}(512\lambda+504;3,66,8)$.
    \end{itemize}
  \end{corollary}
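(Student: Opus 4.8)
The plan is to feed the existence results for sets of mutually \orthogoval{} Desarguesian affine planes proved in this section into the two covering-array construction theorems above, while tracking the index. First I would assemble the inputs. By \cref{orthog_ag} a set of two mutually \orthogoval{} Desarguesian affine planes of order $q=2^n$ exists for every $n$; by \cref{cor_3_affine_planes} a set of three exists whenever $\gcd(n,6)=1$; by \cref{thm_affine_sum}, via \cref{ex:q4sevenoo,ex:q8sevenoo}, a set of seven exists for $q=4$ and $q=8$; and a set of seven exists for $q=3$, since the large set of $\STS(9)$ is a collection of seven mutually \orthogoval{} copies of the (Desarguesian) plane $\AG(2,3)$, and in \cref{ex:q4sevenoo,ex:q8sevenoo} the planes are images of the Desarguesian translation plane under a linear map and hence are themselves Desarguesian. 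Any sub-collection of a mutually \orthogoval{} set is again mutually \orthogoval{}, so in each of these cases we in fact have a set of $s$ mutually \orthogoval{} Desarguesian affine planes for every $s$ between $2$ and the stated maximum.

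Next I would apply the theorem that turns a set of $s$ mutually \orthogoval{} Desarguesian affine planes into a $\CA_{s-1}(sq^3-q;3,q^2+2,q)$; its proof actually yields an extended Sherwood covering perfect hash family on $s$ rows, $q^2+2$ columns, and index $s-1$. Taking $s=2$ with $q=2^n$ gives the first item directly. For the remaining items I would not collapse that extended hash family all the way down to index~$1$, but instead invoke the higher-index version of the hash-family-to-array construction: for each $\lambda'\le s-1$ the extended hash family yields a $\CA_{\lambda'}\bigl(s(q^3-q)+\lambda' q;\,3,\,q^2+2,\,q\bigr)$. Choosing the smallest admissible number of planes, $s=\lambda'+1$, and using $(\lambda'+1)(q^3-q)+\lambda' q=(\lambda'+1)q^3-q$, this is a $\CA_{\lambda'}\bigl((\lambda'+1)q^3-q;\,3,\,q^2+2,\,q\bigr)$. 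Substituting $q=2^n$ with $\gcd(n,6)=1$ and $\lambda'\le2$ gives the second item; substituting $q=3$, $q=4$, $q=8$ with $\lambda'\le6$ gives the third, fourth, and fifth items respectively, each requiring only a set of $\lambda'+1\le7$ mutually \orthogoval{} Desarguesian planes of that order, which we have arranged.

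The single non-mechanical point, and the part I expect to need care, is that the two-column extension be compatible with the index reduction. The saving in the hash-family-to-array construction for a Sherwood family relies on each of the $q$ constant rows of the array being repeated once per row of the hash family, so that all but $\lambda'$ copies can be deleted; one must check that appending the two line-at-infinity columns does not disturb this. That is exactly the observation at the end of the proof of the $q^2+2$ theorem, namely that the vectors generating the constant rows annihilate the two retained points $(1:0:0)$ and $(0:1:0)$ of the line $z$, so the $q$ copies of each constant row persist and the reduction runs verbatim with $\lambda'$ in place of $1$. A secondary check, for $q\in\{3,4,8\}$, is that the particular families we use admit a completion to projective planes that are \orthogoval{} away from a common line at infinity, as that theorem requires of its Desarguesian inputs; for the large set of $\STS(9)$ and for the spread-based families of \cref{ex:q4sevenoo,ex:q8sevenoo} this can be confirmed directly (and was observed when those families were constructed). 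Everything else is arithmetic.
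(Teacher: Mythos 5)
Your proposal is correct and follows essentially the same route as the paper: the corollary is an immediate consequence of the theorem giving a $\CA_{s-1}(sq^3-q;3,q^2+2,q)$ from $s$ mutually \orthogoval{} Desarguesian affine planes, applied to the existence results (\cref{orthog_ag}, \cref{cor_3_affine_planes}, the $\AG(2,3)$ large set, and \cref{thm_affine_sum}) with sub-collections of $s=\lambda+1$ planes for the smaller indices. Your detour through the higher-index hash-family formula with $\lambda'=s-1$ is just an unrolling of that same theorem and yields identical parameters, since $(\lambda'+1)(q^3-q)+\lambda' q=(\lambda'+1)q^3-q$.
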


We believe these are probably the best known construction for higher index covering arrays in the instances where the sets of mutually \orthogoval{} planes exist.  

\section{Final thoughts}\label{sec_final}

Many open problems remain.  Foremost among them is to determine the largest size of a set of mutually \orthogoval{} planes of order $q$ for all prime powers $q$.  We have some exact values for small $q$ in Theorems \ref{thm_proj_sum} and \ref{thm_affine_sum}.  We have the upper bound, usually $q+2$, of \cref{thm_upperbound} and the lower bound of 2 from \cref{thm_pair_proj} and \cref{orthog_ag}, and for some $q$ we have a lower bound of 3 affine planes from \cref{cor_3_affine_planes}. There is a large gap between the lower and upper  bounds for all but the smallest $q$.

A secondary question is how many essentially different maximal sets of mutually \orthogoval{} planes of order $q$ exist.  Even in the case of the pairs or triples of mutually \orthogoval{} planes constructed in this paper, we do not know if the constructed sets are non-isomorphic.

It may be possible to use \cref{thm_pair_proj}  to find sets of more than two mutually \orthogoval{} planes. If we pick two different sets of three conjugate points, we can use each to build other planes on the point set of the standard plane that are each \orthogoval{} to the standard plane. The transformation from the second to third plane is just the composition of the two Cremona transformations that formed each.  If this composition is itself a conjugated Cremona then we may have constructed a set of three mutually \orthogoval{} projective planes.

An alternative approach to construct more than two \orthogoval{} projective planes is to use difference sets.  Baker \etal \cite{baker_projective_1994} prove that if  $D \subset \mathbb{Z}_{n^2+n+1}$ is a difference set, then $-D$, $2D$ and $D/2$ are all difference sets that are conics.  These could potentially be used to create a set of more than two mutually \orthogoval{} planes of orders other than 3. For example $-D/2 = 2D \neq D$ if and only if $-4$ is a multiplier of $D$ and 2 is not.  In this case, all three of $D$, $-D$ and $2D $ must be distinct and are each related to each other by multipliers of $-1$, $2$ or $1/2$ and thus form ovals in the planes generated by the other.  Gordon, Mills and Welch showed that the only multipliers of a Desarguesian planar difference set in $\mathbb{Z}_{q^2+q+1}$ for $q = p^e$ are the powers of $p$ in $\mathbb{Z}_{q^2+q+1}$ \cite{MR146135}.  Thus if there exists $q = p^e$ such that $p^d \equiv -4 \pmod{q^2+q+1}$ and 2 is not generated by $p$, then there exists a set of three Desarguesian \orthogoval{} projective planes of order $q$. Unfortunately, 3 is the only prime power up to 10,000,000 for which this is true.

It is also natural to generalize the definition of \orthogoval{} to other geometric objects.  Possibilities that are intriguing include inversive planes and higher dimensional projective and affine spaces.

\section{Acknowledgements}

We would like to thank Erin Lanus and Kaushik Sarkar. They found an $\SCPHF(2;3,8,64)$ in a conditional expectation search.  This turned out to have the structure of a pair of \orthogoval{} affine planes of order 8 and discussion of this structure in several venues was the impetus for the research presented in this article.  We thank Stefaan De Winter who pointed out that the set of seven mutually \orthogoval{} affine planes of order 4 constructs a Kirkman triple system of order 15.
      
\bibliographystyle{plain}
\bibliography{geobib}
\end{document}